\newtheorem{theorem}{Theorem}[section]
\newtheorem{lemma}[theorem]{Lemma}
\newtheorem{corollary}[theorem]{Corollary}
\newtheorem{definition}[theorem]{Definition}
\newtheorem{remark}[theorem]{Remark}
\newtheorem{note}[theorem]{Note}
\begin{document}

\begin{center}
{\Large \bf Euler-type Recurrence Relation for Arbitrary Arithmetical Function} \\[1.5em]

{\bf A. David Christopher} \\[0.5em]
{\small Department of Mathematics, The American College, Madurai, India} \\[0.3em]
{\tt davchrame@yahoo.co.in}
\end{center}

\vskip 20pt

\begin{abstract}
An interplay between the Lambert series and Euler's Pentagonal Number Theorem gives an Euler-type recurrence relation for any given arithmetical function. As consequences of this, we present Euler-type recurrence relations for some well-known arithmetic functions. Furthermore, we derive Euler-type recurrence relations for certain partition functions and sum-of-divisors functions using infinite product identities of Jacobi and Gauss.
\end{abstract}

\vskip 20pt

\section{Motivation and Statement of Results}


Euler's Pentagonal Number Theorem \cite{euler} states that, for complex $q$ with $|q|<1$,
\begin{equation}
\prod_{n=1}^{\infty}(1-q^n)=\sum_{m=0}^{\infty}\omega(m)q^m,
\end{equation}
where 
\[\omega(m)=\begin{cases} 1 &{\text {if }}  m=0;\\
(-1)^k &{\text{if }} m=\frac{3k^2\pm k}{2};\\
0 &{\text{otherwise.}}
\end{cases}
\]
This result, combined with the generating function for $p(n)$, the number of partitions of $n$,
\begin{equation}\label{gfp}
\prod_{m=1}^{\infty}\frac{1}{1-q^m}=\sum_{n=0}^{\infty}p(n)q^n
\end{equation}
gives a beautiful recurrence relation for $p(n)$:
\begin{equation}\label{pr}
\sum_{k=0}^{n}\omega(k)p(n-k)=0\text{   for each }n\geq 1.
\end{equation}
When Euler's Pentagonal Number Theorem is combined with the generating function for $q(n)$, the number of distinct partitions (partitions with distinct parts) of $n$,
\[\prod_{m=1}^{\infty}(1+q^m)=\sum_{n=0}^{\infty}q(n)q^n
\]
gives the following recurrence relation for $q(n)$, which is mentioned in \cite[p.~826]{abram}:
\begin{equation}\label{qetr}
\sum_{k=0}^n\omega(k)q(n-k)=\begin{cases}
\omega(\frac{n}{2})&\text{ if }n\text{ is even;}\\
0&\text{ otherwise.}
\end{cases}
\end{equation}
Using the operator $q\frac{d}{dq}\log$ in ({\ref{gfp}), one can get 
\[
\sum_{n=1}^{\infty}n\omega(n)q^n=\left(\sum_{m=0}^{\infty}\omega(m)q^m\right)\left(-\sum_{m=1}^{\infty}m\frac{q^m}{1-q^m}\right).
\]
Then, the Lambert series representation gives the following recurrence relation for $\sigma(n)$, the sum of positive divisors of $n$
\begin{equation}\label{sr}
\sum_{k=1}^{n}\sigma(k)\omega(n-k)=-n\omega(n).
\end{equation}
This recurrence relation involving pentagonal numbers was first realized by Euler \cite{euler2}. 

Any recurrence relation that assumes the pattern of the recurrence relations mentioned above is termed as Euler-type. For a derivation of the other Euler-type recurrence relations, see \cite{david}. In \cite{david}, Euler-type recurrence relation for the function $\tau_A(n)$ (resp. $\sigma_A(n)$), the number of (resp. the sum of) divisors of $n$ with divisors from a set of positive integers $A$  was derived by fusing Euler's Pentagonal Number Theorem and generating function of $\tau_A(n)$ (resp. $\sigma_A(n)$). The derivation technique wielded in \cite{david} is generalized here, and we obtain the first result of this paper. 
\begin{theorem}\label{etr}
Let $g$ be an arithmetical function. Then we have
\begin{equation}\label{etr-formula}
\sum_{k=1}^{n}g(k)\omega(n-k)=\sum_{m+k=n; m\geq 1;k\geq 0}f(m)\left(\omega(k)+\omega(k-m)+\omega(k-2m)+\cdots \right),
\end{equation}
where 
\[g(n)=\sum_{d|n}f(d).
\] 
\end{theorem}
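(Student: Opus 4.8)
The plan is to establish the identity at the level of formal power series in $q$ and then compare coefficients of $q^n$; this is the same template used for $p(n)$, $q(n)$ and $\sigma(n)$ in the introduction, now driven by an arbitrary $f$. The two ingredients are Euler's Pentagonal Number Theorem, $\prod_{n\ge 1}(1-q^n)=\sum_{m\ge 0}\omega(m)q^m$, and the classical Lambert series identity, which converts the divisor-sum relation $g(n)=\sum_{d\mid n}f(d)$ into a statement about the generating function of $g$. No convergence issue arises: the coefficient of $q^n$ in each series below involves only $f(1),\dots,f(n)$ and finitely many values of $\omega$, so one may argue entirely in the ring of formal power series (or, if one prefers analytic statements, restrict to $|q|<1$ under a mild growth hypothesis on $f$, exactly as for $\sigma$).

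First I would record the Lambert series representation
\[
\sum_{n\ge 1}g(n)q^n=\sum_{m\ge 1}f(m)\,\frac{q^m}{1-q^m}.
\]
Expanding $\frac{q^m}{1-q^m}=\sum_{j\ge 1}q^{mj}$ and collecting the coefficient of $q^N$ on the right gives $\sum_{m\mid N}f(m)$, which equals $g(N)$ by hypothesis; hence the two sides agree coefficientwise.

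Next I would multiply both sides of this identity by $\prod_{n\ge1}(1-q^n)=\sum_{j\ge 0}\omega(j)q^j$. On the left, the coefficient of $q^n$ in $\bigl(\sum_{k\ge 1}g(k)q^k\bigr)\bigl(\sum_{j\ge 0}\omega(j)q^j\bigr)$ is $\sum_{k=1}^{n}g(k)\omega(n-k)$ (using $\omega(0)=1$), which is the left-hand side of \eqref{etr-formula}. On the right, for each fixed $m\ge 1$ the product $\frac{q^m}{1-q^m}\cdot\sum_{j\ge0}\omega(j)q^j=\bigl(\sum_{i\ge1}q^{im}\bigr)\bigl(\sum_{j\ge0}\omega(j)q^j\bigr)$ has $q^n$-coefficient $\sum_{i\ge1}\omega(n-im)$, a finite sum since $\omega$ vanishes on negative arguments. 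Writing $k=n-m$ turns this into $\omega(k)+\omega(k-m)+\omega(k-2m)+\cdots$, and summing against $f(m)$ over all $m\ge1$ — equivalently over all decompositions $m+k=n$ with $m\ge 1$, $k\ge 0$ — produces exactly the right-hand side of \eqref{etr-formula}. Equating the two expressions for the $q^n$-coefficient completes the proof.

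The work is essentially bookkeeping, and the only point that needs a moment's care is the last re-indexing step: justifying that the double sum over pairs $(m,i)$ with $im\le n$ may be reorganized so that, after the substitution $n-im=k-(i-1)m$, the tail appears in the stated shape $\omega(k)+\omega(k-m)+\cdots$. A convenient sanity check is $f(m)=m$, for which $g=\sigma$ and \eqref{etr-formula} must reduce to Euler's recurrence \eqref{sr}; verifying this also pins down the sign and indexing conventions.
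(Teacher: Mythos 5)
Your proof is correct and follows essentially the same route as the paper: start from the Lambert series representation of $\sum g(n)q^n$, multiply by $\prod_{n\geq 1}(1-q^n)$, and compare coefficients of $q^n$. The only cosmetic difference is that you compute the coefficient of $q^n$ in $\frac{q^m}{1-q^m}\prod_{n\geq 1}(1-q^n)$ by directly expanding the geometric series, whereas the paper isolates this computation as Lemma \ref{l1} and proves it by iterating the recurrence $\omega_k(n)-\omega_k(n-k)=\omega(n)$; the two are interchangeable.
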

\begin{remark}
\emph{The highlighting feature of the above recurrence relation is that: though $g$ is a divisor sum of $f$, if $f$ can be put as an expression without factorization being involved, then one can obtain a recurrence relation for $g$ without factoring each $k$. On the other hand, if $g$ is a known simple expression where factorization is not needed and, though $f$ may depend on factorization, then using (\ref{etr-formula}) one can obtain a recurrence relation for $f$ without factoring each $m$}.
\end{remark}
In Section 2, we present a proof for Theorem \ref{etr}, and we employ Theorem \ref{etr}  to obtain  recurrence relations  of Euler-type for several arithmetical functions including the Euler's phi function $\phi(n)$, the number-of-divisors function $\tau(n)$, the Liouville's function $\lambda(n)$ and the M\"{o}bius function $\mu(n)$.

In Section 3, manipulation of infinite product identities plays a vital role.  The following result is established in Section 3 by making use of the Jacobi's triple product identity.

\begin{theorem}\label{etnew1}
Let $q(n)$ and $qq(n)$ be the number of distinct partitions of $n$  and the number of distinct partitions of $n$ with odd parts, respectively. We have
\begin{enumerate}
\item[(a)] 
\begin{equation}
\sum_{k=0}^{n}(-1)^{k}qq(k)\omega(n-k)=\begin{cases}
1 &\text{if }n=0\\
2 &\text{if }\delta_s(n)=1\text{ and }n\equiv 0(mod\ 2)\\
-2 &\text{if }\delta_s(n)=1\text{ and }n\equiv 1(mod\ 2)\\ 
0 &\text{otherwise,}
\end{cases}
\end{equation}
\item[(b)]
\begin{equation}
q(n)+2\sum_{k\geq 1}(-1)^k\delta_s(k)q(n-k)=\omega(n),
\end{equation}
where
\[\delta_s(n)=\begin{cases}
1 &\text{if $n$ is a square number}\\
0 &\text{otherwise.}
\end{cases}
\]
\end{enumerate}
\end{theorem}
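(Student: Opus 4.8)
The plan is to translate both identities into statements about formal power series (convergent for $|q|<1$) and reduce everything to one specialization of Jacobi's triple product identity
\[
\prod_{n=1}^{\infty}(1-x^{2n})(1+x^{2n-1}z)(1+x^{2n-1}z^{-1})=\sum_{k=-\infty}^{\infty}x^{k^2}z^k.
\]
Putting $z=-1$ and $x=q$ collapses the two middle factors and yields the key identity
\[
\Bigl(\prod_{m=1}^{\infty}(1-q^{2m-1})\Bigr)^{2}\prod_{m=1}^{\infty}(1-q^{2m})=1+2\sum_{k=1}^{\infty}(-1)^{k}q^{k^{2}},
\]
which I would record as a preliminary lemma, since it drives both parts. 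I will also use the generating functions $\sum_{n\geq 0}q(n)q^n=\prod_{m\geq 1}(1+q^m)$ and $\sum_{n\geq 0}qq(n)q^n=\prod_{m\geq 1}(1+q^{2m-1})$, together with Euler's classical identity $\prod_{m\geq 1}(1+q^m)=1/\prod_{m\geq 1}(1-q^{2m-1})$, which is immediate from $1+q^m=(1-q^{2m})/(1-q^m)$ and telescoping.

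For part (a): replacing $q$ by $-q$ in the generating function for $qq(n)$ gives $\sum_{k\geq 0}(-1)^k qq(k)q^k=\prod_{m\geq 1}(1-q^{2m-1})$. Now $\sum_{k=0}^n(-1)^k qq(k)\omega(n-k)$ is exactly the coefficient of $q^n$ in the Cauchy product of $\sum_k(-1)^k qq(k)q^k$ with $\sum_m\omega(m)q^m=\prod_{n\geq 1}(1-q^n)$, so after the split $\prod_{n\geq 1}(1-q^n)=\prod_{m\geq 1}(1-q^{2m-1})\prod_{m\geq 1}(1-q^{2m})$ this product is precisely the left-hand side of the key identity above. Comparing coefficients of $q^n$ on its right-hand side gives the claimed values: the coefficient equals $1$ at $n=0$, equals $2(-1)^k$ when $n=k^2$ (i.e. when $\delta_s(n)=1$), and is $0$ otherwise; and since $k$ and $k^2$ have the same parity, $2(-1)^k$ is $2$ when $n$ is even and $-2$ when $n$ is odd, which is exactly the stated case distinction.

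For part (b): the left-hand side $q(n)+2\sum_{k\geq 1}(-1)^k\delta_s(k)q(n-k)$ is the coefficient of $q^n$ in $\bigl(\sum_{n\geq 0}q(n)q^n\bigr)\bigl(1+2\sum_{k\geq 1}(-1)^k q^{k^2}\bigr)$, since $\sum_{k\geq 1}(-1)^k\delta_s(k)q^k=\sum_{k\geq 1}(-1)^k q^{k^2}$. Using the key identity to rewrite the second factor and Euler's identity $\prod(1+q^m)=1/\prod(1-q^{2m-1})$ for the first, the product telescopes:
\[
\frac{1}{\prod_{m\geq 1}(1-q^{2m-1})}\Bigl(\prod_{m\geq 1}(1-q^{2m-1})\Bigr)^{2}\prod_{m\geq 1}(1-q^{2m})
=\prod_{m\geq 1}(1-q^{2m-1})\prod_{m\geq 1}(1-q^{2m})=\prod_{n\geq 1}(1-q^n),
\]
whose $q^n$-coefficient is $\omega(n)$ by Euler's Pentagonal Number Theorem. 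Extracting coefficients completes the proof.

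The computations are routine once the key identity is in hand, so I do not anticipate a serious obstacle; the one point that needs care is the bookkeeping in part (a) --- matching the sign $(-1)^k$ attached to the square $n=k^2$ with the parity of $n$ --- together with checking that all the power-series manipulations (Cauchy products, the splitting of $\prod(1-q^n)$ into odd and even factors, and Euler's telescoping identity) are legitimate, which they are for $|q|<1$ since the relevant infinite products converge absolutely there.
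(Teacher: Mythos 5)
Your proof is correct and follows essentially the same route as the paper: both rest on the specialization of Jacobi's triple product giving $\prod_m(1-q^{2m-1})^2(1-q^{2m})=1+2\sum_{k\ge1}(-1)^kq^{k^2}$ (you get it by setting $z=-1$ directly, the paper by substituting $q\mapsto-q$ in the $z=1$ case — the same identity), then read off (a) by expanding the left side as $\bigl(\sum_k(-1)^kqq(k)q^k\bigr)\prod_n(1-q^n)$ and (b) by dividing through by $\prod_m(1-q^{2m-1})$ via Euler's partition theorem. No gaps; the parity bookkeeping $(-1)^k=(-1)^{k^2}$ in (a) is handled correctly.
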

Furthermore, in Section 3, the following identities due to Gauss \cite{gauss} were used to establish Theorem \ref{etnew2} and Theorem \ref{comp}.
\begin{enumerate}
\item
\begin{equation}\label{gauss1}
\sum_{n=0}^{\infty}q^{\frac{n(n+1)}{2}}=\frac{\prod_{m=1}^{\infty}(1-q^{2m})}{\prod_{m=1}^{\infty}(1-q^{2m-1})},
\end{equation}
\item
\begin{equation}\label{gauss2}
\sum_{n=-\infty}^{\infty}(-1)^nq^{n^2}=\prod_{m=1}^{\infty}\frac{1-q^m}{1+q^m}.
\end{equation}
\end{enumerate}
\begin{theorem}\label{etnew2}
We have
\begin{enumerate}
\item[(a)]
\begin{equation}
\sum_{k=0}^{\lfloor\frac{n}{2}\rfloor}q(n-2k)\omega(k)=\begin{cases}
1&\text{ if }\delta_t(n)=1\\
0&\text{ otherwise,}
\end{cases}
\end{equation}
\item[(b)]
\begin{equation}
\sum_{k=0}^{\lfloor\frac{n}{2}\rfloor}p(k)\delta_t(n-2k)=q(n),
\end{equation}
\item[(c)]
\begin{equation}
\sum_{k=0}^{n}(-1)^kqq(k)\delta_t(n-k)=\begin{cases}
\omega(\frac{n}{2})&\text{ if }n\text{ is even}\\
0&\text{ otherwise},
\end{cases}
\end{equation}
where 
\[\delta_t(n)=\begin{cases}1&\text{ if  }n=\frac{m(m+1)}{2}\\
0&\text{ otherwise.}
\end{cases}
\]
\end{enumerate}
\end{theorem}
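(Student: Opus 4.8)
The plan is to prove each of the three identities by the same two--move strategy used throughout the paper: recognise the left--hand side as the coefficient of $q^n$ in a product of two known generating functions, and then simplify that product --- using the even/odd factorisation $\prod_{m\geq 1}(1-q^m)=\prod_{m\geq 1}(1-q^{2m})\prod_{m\geq 1}(1-q^{2m-1})$, Gauss's identity (\ref{gauss1}), and Euler's Pentagonal Number Theorem --- into a series whose coefficients are visibly the right--hand side.

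First I would collect the generating functions needed for $|q|<1$: $\sum_{n\geq 0}q(n)q^n=\prod_{m\geq 1}(1+q^m)$, $\sum_{n\geq 0}qq(n)q^n=\prod_{m\geq 1}(1+q^{2m-1})$, $\sum_{n\geq 0}p(n)q^n=\prod_{m\geq 1}(1-q^m)^{-1}$, and $\sum_{n\geq 0}\delta_t(n)q^n=\sum_{m\geq 0}q^{m(m+1)/2}$, which by (\ref{gauss1}) equals $\prod_{m\geq 1}(1-q^{2m})/\prod_{m\geq 1}(1-q^{2m-1})$. I would also record the elementary identity $\prod_{m\geq 1}(1+q^m)=\prod_{m\geq 1}(1-q^{2m})/\prod_{m\geq 1}(1-q^m)=1/\prod_{m\geq 1}(1-q^{2m-1})$, and the consequence of Euler's Pentagonal Number Theorem under the substitution $q\mapsto q^2$, namely $\prod_{m\geq 1}(1-q^{2m})=\sum_{m\geq 0}\omega(m)q^{2m}$.

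Then the three parts follow. For (a), the left--hand side is the $q^n$--coefficient of $\bigl(\sum_{n\geq 0}q(n)q^n\bigr)\bigl(\sum_{k\geq 0}\omega(k)q^{2k}\bigr)=\prod_{m\geq 1}(1+q^m)\prod_{m\geq 1}(1-q^{2m})$, which collapses to $\prod_{m\geq 1}(1-q^{2m})/\prod_{m\geq 1}(1-q^{2m-1})=\sum_{m\geq 0}q^{m(m+1)/2}$ by (\ref{gauss1}), whose $q^n$--coefficient is exactly $\delta_t(n)$. For (b), the left--hand side is the $q^n$--coefficient of $\bigl(\sum_{k\geq 0}p(k)q^{2k}\bigr)\bigl(\sum_{n\geq 0}\delta_t(n)q^n\bigr)=\prod_{m\geq 1}(1-q^{2m})^{-1}\cdot\prod_{m\geq 1}(1-q^{2m})/\prod_{m\geq 1}(1-q^{2m-1})=\prod_{m\geq 1}(1+q^m)$, whose $q^n$--coefficient is $q(n)$. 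For (c), since replacing $q$ by $-q$ in $\sum_{n\geq 0}qq(n)q^n=\prod_{m\geq 1}(1+q^{2m-1})$ gives $\sum_{n\geq 0}(-1)^nqq(n)q^n=\prod_{m\geq 1}(1-q^{2m-1})$, the left--hand side is the $q^n$--coefficient of $\prod_{m\geq 1}(1-q^{2m-1})\cdot\prod_{m\geq 1}(1-q^{2m})/\prod_{m\geq 1}(1-q^{2m-1})=\prod_{m\geq 1}(1-q^{2m})=\sum_{m\geq 0}\omega(m)q^{2m}$, which equals $\omega(n/2)$ when $n$ is even and $0$ otherwise.

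I do not expect a genuine obstacle: the argument is a disciplined chain of manipulations of absolutely convergent infinite products. The only points requiring care are the telescoping of the products over even versus odd indices, the substitution $q\mapsto -q$ in the generating function for $qq(n)$ that produces the alternating signs in part~(c), and the substitution $q\mapsto q^2$ in Euler's Pentagonal Number Theorem; each is routine, but each must be tracked precisely so that the parity conditions (``$n$ even'', ``$n$ triangular'') in the statement come out exactly right.
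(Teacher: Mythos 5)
Your proposal is correct and follows essentially the same route as the paper: each part is obtained by reading the left-hand side as the $q^n$-coefficient of a product of generating functions and then collapsing that product via Gauss's identity (\ref{gauss1}), Euler's partition theorem $\prod(1+q^m)=1/\prod(1-q^{2m-1})$, and the substitutions $q\mapsto q^2$ and $q\mapsto -q$. No gaps.
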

Recall that, a  {\it composition} of a positive integer $n$ is an ordered sequence of  positive integers, say $\pi=(a_1,a_2,\cdots ,a_k)$, such that $a_1+a_2+\ldots+a_k=n$. Composition $\pi$ is said to be {\it distinct} if $a_i\neq a_j$ for every $i\neq j$. Composition $\pi$ is said to be $relatively\  prime$ if $\gcd(a_1,a_2,\cdots ,a_k)=1$.
\begin{theorem}\label{comp}
Let $s(n)$ and $t(n)$ be the number of compositions of $n$ with square numbers as parts and number of compositions of $n$ with triangular numbers as parts, respectively. We have
\begin{enumerate}
\item[(a)]
\begin{equation}
\sum_{k=0}^{n}(-1)^ks(k)\left(3q(n-k)-\omega(n-k)\right)=2q(n),
\end{equation} 
\item[(b)]
\begin{equation}
\sum_{k=0}^{n}t(k)\left(2(-1)^{n-k}qq(n-k)-\omega'(n)\right)=(-1)^nqq(n),
\end{equation}
where 
\[w'(n)=\begin{cases}
\omega(\frac{n}{2})&\text{ if }$n$\text{ is even}\\
0&\text{ otherwise.}
\end{cases}
\]
\end{enumerate}
\end{theorem}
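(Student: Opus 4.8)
The plan is to recast both identities as relations between analytic functions of $q$ on $|q|<1$ and then read off the stated recurrences by comparing coefficients of $q^n$. The basic bookkeeping fact is that the compositions of $n$ with parts drawn from a set $S\subseteq\{1,2,\dots\}$ are enumerated by $\left(1-\sum_{s\in S}q^{s}\right)^{-1}$; in particular $\sum_{n\ge 0}s(n)q^n=\left(1-\sum_{m\ge 1}q^{m^2}\right)^{-1}$ and $\sum_{n\ge 0}t(n)q^n=\left(1-\sum_{m\ge 1}q^{m(m+1)/2}\right)^{-1}$. Besides Euler's Pentagonal Number Theorem $\prod_{m\ge1}(1-q^m)=\sum_{m\ge0}\omega(m)q^m$, I will use the two product generating functions $\prod_{m\ge1}(1+q^m)=\sum_{n\ge0}q(n)q^n$ and $\prod_{m\ge1}(1+q^{2m-1})=\sum_{n\ge0}qq(n)q^n$.

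For part (a), first substitute $q\mapsto-q$ in the series for $s(n)$. Since $m^2\equiv m\pmod 2$ one has $(-q)^{m^2}=(-1)^mq^{m^2}$, so $\sum_{n\ge0}(-1)^ns(n)q^n=\left(1-\sum_{m\ge1}(-1)^mq^{m^2}\right)^{-1}$. Gauss's identity (\ref{gauss2}) says $1+2\sum_{m\ge1}(-1)^mq^{m^2}=\sum_{n=-\infty}^{\infty}(-1)^nq^{n^2}=\prod_{m\ge1}\frac{1-q^m}{1+q^m}$, hence $1-\sum_{m\ge1}(-1)^mq^{m^2}=\frac12\left(3-\prod_{m\ge1}\frac{1-q^m}{1+q^m}\right)$, so that
\[
\sum_{n\ge0}(-1)^ns(n)q^n=\frac{2\prod_{m\ge1}(1+q^m)}{3\prod_{m\ge1}(1+q^m)-\prod_{m\ge1}(1-q^m)}.
\]
Clearing the denominator and writing $3\prod_{m\ge1}(1+q^m)-\prod_{m\ge1}(1-q^m)=\sum_{j\ge0}\left(3q(j)-\omega(j)\right)q^j$ and $2\prod_{m\ge1}(1+q^m)=2\sum_{n\ge0}q(n)q^n$, the coefficient of $q^n$ is exactly the asserted identity.

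For part (b), write $1-\sum_{m\ge1}q^{m(m+1)/2}=2-\sum_{m\ge0}q^{m(m+1)/2}$ and apply Gauss's identity (\ref{gauss1}), giving $\sum_{m\ge0}q^{m(m+1)/2}=\frac{\prod_{m\ge1}(1-q^{2m})}{\prod_{m\ge1}(1-q^{2m-1})}$. Substituting $q\mapsto-q$ in $\prod_{m\ge1}(1+q^{2m-1})=\sum_{n\ge0}qq(n)q^n$ gives $\prod_{m\ge1}(1-q^{2m-1})=\sum_{n\ge0}(-1)^nqq(n)q^n$, and Euler's Pentagonal Number Theorem with $q^2$ for $q$ gives $\prod_{m\ge1}(1-q^{2m})=\sum_{n\ge0}\omega'(n)q^n$. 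Hence
\[
\sum_{n\ge0}t(n)q^n=\frac{\sum_{k\ge0}(-1)^kqq(k)q^k}{2\sum_{k\ge0}(-1)^kqq(k)q^k-\sum_{n\ge0}\omega'(n)q^n},
\]
and clearing the denominator and comparing the coefficient of $q^n$ gives the identity of (b), the summand being $2(-1)^{n-k}qq(n-k)-\omega'(n-k)$.

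I do not expect a serious obstacle: the argument is a short chain of substitutions into the product identities of Euler and Gauss, closed by one ``clear denominators and compare coefficients'' step. The points deserving care are (i) the parity identity $(-1)^{m^2}=(-1)^m$, which is what makes the sign-twisted square-composition series match (\ref{gauss2}); (ii) correctly peeling off the $m=0$ term of $\sum_{m\ge0}q^{m(m+1)/2}$ — and, inside (\ref{gauss2}), the ``$1+2\sum$'' normalization — since this is what produces the constants $3$ and $2$; and (iii) observing that every series in sight converges absolutely on $|q|<1$ and the denominators are nonzero near $q=0$, so all of the above hold as formal power series and therefore coefficient by coefficient.
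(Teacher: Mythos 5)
Your proof is correct and takes essentially the same route as the paper: the Heubach--Mansour generating function $\left(1-\chi_A(q)\right)^{-1}$ for compositions with parts in $A$, the sign twist $q\mapsto -q$ together with $(-1)^{m^2}=(-1)^m$, Gauss's identities (\ref{gauss2}) and (\ref{gauss1}), and a clear-denominators-and-compare-coefficients step. The paper only sketches part (b) as ``a similar application''; your worked-out version is right, and you correctly identify the summand as $2(-1)^{n-k}qq(n-k)-\omega'(n-k)$, i.e.\ with $\omega'(n-k)$ rather than the $\omega'(n)$ misprinted in the theorem statement.
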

In Section 4, we wield the logarithmic derivative technique, in the way of  Euler, to get the following result.
\begin{theorem}\label{ld1}
Let $\sigma_o(n)$(resp. $\sigma_e(n)$) be the sum of odd (resp. even) positive divisors of $n$. We have
\begin{enumerate}
\item[(a)]
\begin{equation}
\sigma(n)+\sigma_o(n)+2\sum_{k\geq 1}(-1)^k\delta_s(k)(\sigma(n-k)+\sigma_o(n-k))=2(-1)^{n+1}n\delta_s(n),
\end{equation} 
\item[(b)]
\begin{equation}
\sum_{k=1}^n\left(\sigma_o(k)-\sigma_e(k)\right)\delta_t(n-k)=n\delta_t(n),
\end{equation}
\item[(c)]
for $n\geq 2$, we have
\begin{equation}
\sigma(n)+\sigma_s(n)+2\sum_{k\geq 1}(-1)^k\delta_s(k)(\sigma(n-k)+\sigma_s(n-k))=2(-1)^{n+1}n\delta_s(n),
\end{equation}
where 
\[\sigma_s(n)=\sum_{d|n}(-1)^{d-1}\frac{n}{d}.
\]
\end{enumerate}
\end{theorem}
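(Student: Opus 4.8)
The plan is to follow Euler's route to the recurrence (\ref{sr}): apply the operator $q\frac{d}{dq}\log$ to one of Gauss's product identities, expand the resulting right-hand side as a Lambert series so as to recognise it as the generating function of a divisor sum, then clear denominators and compare the coefficient of $q^{n}$ on both sides. All three parts fit this template, so the only real content is the identification of the relevant Lambert series in each case.

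For part (a) I would start from Gauss's identity (\ref{gauss2}). Applying $q\frac{d}{dq}\log$ to the product side,
\[
q\frac{d}{dq}\log\prod_{m\geq 1}\frac{1-q^m}{1+q^m}=-\sum_{m\geq 1}mq^m\Bigl(\frac{1}{1-q^m}+\frac{1}{1+q^m}\Bigr)=-2\sum_{m\geq 1}\frac{mq^m}{1-q^{2m}},
\]
and expanding each geometric series one checks, by writing $N=2^{a}b$ with $b$ odd and summing over the divisors $m$ of $N$ with $N/m$ odd, that $2\sum_{m\geq 1}\frac{mq^m}{1-q^{2m}}=\sum_{N\geq 1}\bigl(\sigma(N)+\sigma_o(N)\bigr)q^{N}$. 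On the series side, $q\frac{d}{dq}\log\sum_{n}(-1)^{n}q^{n^2}$ equals $\bigl(\sum_{n}(-1)^{n}n^{2}q^{n^2}\bigr)$ divided by $\bigl(\sum_{n}(-1)^{n}q^{n^2}\bigr)$, and the numerator has $q^{n}$-coefficient $2(-1)^{n}n\,\delta_s(n)$ for $n\geq 1$ (a perfect square has the parity of its root, so $(-1)^{\sqrt n}=(-1)^{n}$). Equating the two logarithmic derivatives, multiplying through by $\sum_{n}(-1)^{n}q^{n^2}$, and comparing the coefficient of $q^{n}$ then gives the formula in (a), once one adopts the convention $\sigma(0)=\sigma_o(0)=0$.

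Part (c) is the same computation reorganised around $\sigma_s$. Since $q\frac{d}{dq}\log\prod_{m\geq 1}(1-q^{m})^{-1}=\sum_{N}\sigma(N)q^{N}$ while $q\frac{d}{dq}\log\prod_{m\geq 1}(1+q^{m})=\sum_{N}\sigma_s(N)q^{N}$ — the Lambert series of $\frac{mq^{m}}{1+q^{m}}$ produces exactly $\sum_{d\mid N}(-1)^{d-1}N/d$ — I would apply $q\frac{d}{dq}\log$ to the reciprocal of (\ref{gauss2}), namely to $\prod_{m\geq 1}\frac{1+q^{m}}{1-q^{m}}=\bigl(\sum_{n}(-1)^{n}q^{n^2}\bigr)^{-1}$, and compare coefficients; this reproduces (c) verbatim. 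In fact, splitting $\sum_{d\mid n}(-1)^{d-1}n/d$ according to the $2$-adic valuation of $d$ shows $\sigma_s(n)=\sigma_o(n)$ for every $n$, so (c) is literally (a) and the stated restriction $n\geq 2$ is unnecessary.

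For part (b) I would apply $q\frac{d}{dq}\log$ to (\ref{gauss1}). Rewriting its right-hand side as $\prod_{m\geq 1}(1-q^{2m})^{2}$ divided by $\prod_{m\geq 1}(1-q^{m})$ (using $\prod(1-q^{m})=\prod(1-q^{2m})\prod(1-q^{2m-1})$), its logarithmic derivative is $\sum_{N}\sigma(N)q^{N}-4\sum_{L}\sigma(L)q^{2L}$, whose $q^{n}$-coefficient is $\sigma(n)$ for odd $n$ and $\sigma(n)-4\sigma(n/2)$ for even $n$; a short computation with $n=2^{a}b$ identifies this with $\sigma_o(n)-\sigma_e(n)$. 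The left side of (\ref{gauss1}) is $\sum_{N}\delta_t(N)q^{N}$, so $q\frac{d}{dq}\log$ of it is $\bigl(\sum_{N}N\,\delta_t(N)q^{N}\bigr)$ over $\bigl(\sum_{N}\delta_t(N)q^{N}\bigr)$; equating and clearing the denominator gives $\sum_{N}N\,\delta_t(N)q^{N}=\bigl(\sum_{N}\delta_t(N)q^{N}\bigr)\bigl(\sum_{N}(\sigma_o(N)-\sigma_e(N))q^{N}\bigr)$, and its $q^{n}$-coefficient is precisely the identity in (b). The one step in all of this that is not routine bookkeeping is each Lambert-series identification — recognising $2\sum\frac{mq^{m}}{1-q^{2m}}$, the $\frac{mq^{m}}{1+q^{m}}$ series, and $\sum\sigma(N)q^{N}-4\sum\sigma(L)q^{2L}$ as the generating functions of $\sigma+\sigma_o$, $\sigma_s$, and $\sigma_o-\sigma_e$ respectively — and each of these reduces to a computation over the $2$-adic valuation of the divisor.
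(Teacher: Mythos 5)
Your proof is correct and follows essentially the same route as the paper: apply $q\frac{d}{dq}\log$ to the Gauss/Jacobi product identities, identify the resulting Lambert series with $\sigma+\sigma_o$, $\sigma_o-\sigma_e$ and $\sigma+\sigma_s$ respectively, clear the denominator and compare coefficients of $q^n$ (your starting point for (a), Gauss's identity (\ref{gauss2}), is exactly the $q\mapsto-q$ form of the triple-product identity the paper uses, and your direct identification of $\sum_m mq^m/(1+q^m)$ is the paper's Lemma \ref{sigmas}). Your additional observation that $\sigma_s(n)=\sigma_o(n)$ for all $n$, so that (c) coincides with (a) and the restriction $n\geq 2$ is superfluous, is also correct.
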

The meaning of the notation specified in this section stands throughout the article. 
\section{ Proof and applications of Theorem \ref{etr}}
The following lemma forms a crucial part of the proof. 
\begin{lemma}\label{l1}
We have 
\begin{equation}
\frac{\prod_{n=1}^{\infty}(1-q^n)}{1-q^k}=\sum_{m=0}^{\infty}\left(\omega(m)+\omega(m-k)+\omega(m-2k)+\cdots \right)q^m
\end{equation}
with $\omega(m)=0$ for every $m<0$.
\end{lemma}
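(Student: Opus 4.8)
The plan is to obtain the identity by expanding both factors on the left-hand side as power series in $q$ and forming their product. First I would invoke Euler's Pentagonal Number Theorem, which gives $\prod_{n=1}^{\infty}(1-q^n)=\sum_{m=0}^{\infty}\omega(m)q^m$ for $|q|<1$, and combine it with the geometric-series expansion $\dfrac{1}{1-q^k}=\sum_{j=0}^{\infty}q^{jk}$, valid on the same disc. Since both series converge absolutely for $|q|<1$, their product equals the Cauchy product, and the rearrangements below are legitimate.

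Next I would read off the coefficient of $q^m$ in the product $\left(\sum_{m\geq 0}\omega(m)q^m\right)\left(\sum_{j\geq 0}q^{jk}\right)$. Writing $m=i+jk$ with $i\geq 0$ and $j\geq 0$, this coefficient is $\sum_{j\geq 0}\omega(m-jk)$; adopting the stated convention $\omega(m)=0$ for $m<0$, the summand vanishes once $jk>m$, so the sum is finite and equals exactly $\omega(m)+\omega(m-k)+\omega(m-2k)+\cdots$. Equating the coefficients of $q^m$ on the two sides of the asserted identity then completes the proof.

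I do not expect a genuine obstacle here: the only points needing a word of justification are the absolute convergence that licenses the Cauchy product and the finiteness of the inner sum over $j$, both of which are immediate. The content of the lemma is organizational — it records what Euler's theorem becomes after division by $1-q^k$ — and it is set up precisely so that summing it against the divisor-sum decomposition $g(n)=\sum_{d\mid n}f(d)$ will yield Theorem \ref{etr}.
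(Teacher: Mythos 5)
Your proof is correct and amounts to the same computation as the paper's, just run in the opposite direction: you expand $1/(1-q^k)$ as a geometric series and take the Cauchy product with Euler's Pentagonal Number Theorem, whereas the paper multiplies the unknown series by $(1-q^k)$, obtains the recurrence $\omega_k(n)-\omega_k(n-k)=\omega(n)$, and unrolls it. Both routes are valid and of comparable length, and your remarks on absolute convergence and the finiteness of the inner sum are fine (if anything, more careful than the paper).
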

\begin{proof}
Let 
\[\frac{\prod_{n=1}^{\infty}(1-q^n)}{1-q^k}=\sum_{m=0}^{\infty}\omega_k(m)q^m.
\]
Then, by Euler's Pentagonal Number Theorem, we  have
\[\omega_k(n)-\omega_k(n-k)=\omega(n).
\]
Repeated application of the above relation gives
\[\omega_k(n)=\omega(n)+\omega(n-k)+\omega(n-2k)+\cdots
\]
as expected. 
\end{proof}
\subsection{ Main part of the proof}
From the Lambert series representation, we have
\[
\sum_{n=1}^{\infty}g(n)q^n=\sum_{m=1}^{\infty}f(m)\frac{q^m}{1-q^m},
\]
where $g(n)=\sum_{d|n}f(d)$. 

Then multiplying both sides by the term $\prod_{n=1}^{\infty}(1-q^n)$, and using Euler's Pentagonal Number Theorem with Lemma \ref{l1} gives the following equalities:
\begin{align*}
\sum_{n=1}^{\infty}\left(\sum_{k=1}^{n}g(k)\omega(n-k)\right)q^n&=\sum_{m=1}^{\infty}f(m)\frac{\prod_{n=1}^{\infty}(1-q^n)}{1-q^m}q^m\\ 
&=\sum_{m=1}^{\infty}f(m)q^m\left(\sum_{k=0}^{\infty}\omega(k)+\omega(k-m)+\omega(k-2m)+\cdots \right)q^k.
\end{align*}
Equating the coefficients of $q^n$ gives the desired end. 
\subsection{ Recurrence relation for $\phi$, $\tau$, $\lambda$ and $\mu$}
As the first application of Theorem \ref{etr}, we present a recurrence relation for the famous Euler's phi function $\phi(n)$.
\begin{theorem}
\begin{equation}
\sum_{k=1}^{n}k\omega(n-k)=\sum_{m+k=n; m\geq 1;k\geq 0}\phi(m)\left(\omega(k)+\omega(k-m)+\omega(k-2m)+\cdots \right).
\end{equation}
\end{theorem}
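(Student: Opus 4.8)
The plan is to apply Theorem~\ref{etr} directly with the arithmetical function chosen so that its divisor sum is the identity function $g(n) = n$. The classical identity of Gauss states that $\sum_{d\mid n}\phi(d) = n$, so we take $f = \phi$ and $g(n) = n$. Substituting $g(k) = k$ on the left-hand side of \eqref{etr-formula} and $f(m) = \phi(m)$ on the right-hand side yields exactly the claimed formula, with no further work beyond invoking the theorem.

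More concretely, the key steps are as follows. First, recall (or briefly prove) the elementary identity $n = \sum_{d\mid n}\phi(d)$; this is standard and follows from partitioning the fractions $1/n, 2/n, \ldots, n/n$ by the size of their reduced denominators, or from multiplicativity. Second, observe that this identity is precisely the hypothesis $g(n) = \sum_{d\mid n} f(d)$ of Theorem~\ref{etr} with $g(n) = n$ and $f(n) = \phi(n)$. Third, plug these into \eqref{etr-formula}: the left side becomes $\sum_{k=1}^{n} k\,\omega(n-k)$ and the right side becomes $\sum_{m+k=n;\,m\geq 1;\,k\geq 0}\phi(m)\left(\omega(k)+\omega(k-m)+\omega(k-2m)+\cdots\right)$, which is the assertion.

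There is essentially no obstacle here: the statement is an immediate specialization of Theorem~\ref{etr}. The only point requiring any care is the correct citation or short justification of $\sum_{d\mid n}\phi(d) = n$, and checking that the indexing conventions (the sum over $m+k=n$ with $m\geq 1$, $k\geq 0$, and the convention $\omega(j)=0$ for $j<0$ from Lemma~\ref{l1}) match those in the theorem statement — which they do verbatim. So the ``hard part,'' such as it is, amounts to nothing more than recognizing $\phi$ as the Möbius-type inverse of the identity function.

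One could additionally remark, although it is not needed for the proof, that the left-hand side $\sum_{k=1}^{n}k\,\omega(n-k)$ admits a closed form in terms of pentagonal numbers via \eqref{sr} applied to $\sigma$ together with $\sum_{k=1}^n \sigma(k)\omega(n-k) = -n\omega(n)$; but since the present theorem is stated as a bare consequence of Theorem~\ref{etr}, I would keep the proof to the one-line substitution described above.
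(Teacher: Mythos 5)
Your proof is correct and is exactly the paper's argument: the paper likewise invokes Gauss's identity $\sum_{d\mid n}\phi(d)=n$ and substitutes $g(n)=n$, $f=\phi$ into Theorem~\ref{etr}. Nothing further is needed.
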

\begin{proof}
Applying Gauss identity
\[\sum_{d|n}\phi(d)=n
\]
in Theorem \ref{etr}, we get the above relation. 
\end{proof}
Next is a recurrence relation for the number-of-divisors function $\tau(n)$ as an immediate consequence of Theorem \ref{etr}.
\begin{theorem}\label{tau}
We have
\begin{equation}
\sum_{k=1}^{n}\tau(k)\omega(n-k)=\sum_{m+k=n; m\geq 1;k\geq 0}(\omega(k)+\omega(k-m)+\cdots ).
\end{equation}
\end{theorem}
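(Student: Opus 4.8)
The plan is to apply Theorem~\ref{etr} directly with the appropriate choice of $f$. Recall that Theorem~\ref{etr} says that if $g(n)=\sum_{d\mid n}f(d)$, then
\[
\sum_{k=1}^{n}g(k)\omega(n-k)=\sum_{m+k=n;\,m\geq 1;\,k\geq 0}f(m)\left(\omega(k)+\omega(k-m)+\omega(k-2m)+\cdots\right).
\]
So the whole task reduces to recognizing $\tau(n)$ as a divisor sum of the right arithmetical function.

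First I would recall the elementary identity $\tau(n)=\sum_{d\mid n}1$, i.e. $\tau$ is the divisor sum of the constant function $f\equiv 1$. Substituting $f(m)=1$ for all $m\geq 1$ and $g=\tau$ into the formula of Theorem~\ref{etr}, the factor $f(m)$ disappears from every term of the inner sum, leaving exactly
\[
\sum_{k=1}^{n}\tau(k)\omega(n-k)=\sum_{m+k=n;\,m\geq 1;\,k\geq 0}\left(\omega(k)+\omega(k-m)+\omega(k-2m)+\cdots\right),
\]
which is precisely the claimed identity. That is the entire argument.

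There is essentially no obstacle here: this is an immediate corollary of Theorem~\ref{etr}, and the only ``step'' is invoking $\tau=\mathbf{1}\ast\mathbf{1}$ in Dirichlet-convolution language. If one wanted to be careful, the mild point to check is the convention that the tails $\omega(k-jm)$ vanish once $k-jm<0$ (so each inner sum is finite), which is already fixed by the statement of Lemma~\ref{l1} and used in the proof of Theorem~\ref{etr}; no new verification is needed. I would therefore present the proof in a single line, exactly as the pattern of the preceding phi-function corollary suggests.
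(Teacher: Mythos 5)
Your proposal is correct and matches the paper's intent exactly: the paper presents this as an immediate consequence of Theorem~\ref{etr} via the identity $\tau(n)=\sum_{d\mid n}1$, which is precisely your choice of $f\equiv 1$. Nothing further is needed.
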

The Liouville's function, denoted by $\lambda(n)$, is defined as 
\begin{equation}
\lambda(n)=\begin{cases} 1 &\text{if } n=1;\\
(-1)^k &\text{if the number of prime factors of $n$ counted with multiplicity is $k$}.
\end{cases}
\end{equation}
Now we present a recurrence relation for $\lambda(n)$, which is based on the following well-known relation:
\begin{equation}
\sum_{d|n}\lambda(d)=\delta_s(n).
\end{equation}
\begin{theorem}
We have
\begin{equation}
\sum_{k=1}^{n}\delta_s(k)\omega(n-k)=\sum_{m+k=n; m\geq 1;k\geq 0}\lambda(m)(\omega(k)+\omega(k-m)+\cdots ).
\end{equation}
\end{theorem}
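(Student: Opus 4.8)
The plan is to apply Theorem~\ref{etr} directly with the specific choice of arithmetical function dictated by the Liouville-M\"{o}bius-type identity $\sum_{d\mid n}\lambda(d)=\delta_s(n)$. In the notation of Theorem~\ref{etr}, one sets $f(m)=\lambda(m)$, so that $g(n)=\sum_{d\mid n}f(d)=\sum_{d\mid n}\lambda(d)=\delta_s(n)$. This is exactly the relation highlighted in the paragraph preceding the theorem, so no Dirichlet-convolution manipulation beyond recalling this classical fact is needed.

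With that substitution in hand, the left-hand side $\sum_{k=1}^{n}g(k)\,\omega(n-k)$ of~(\ref{etr-formula}) becomes $\sum_{k=1}^{n}\delta_s(k)\,\omega(n-k)$, and the right-hand side $\sum_{m+k=n;\,m\geq 1;\,k\geq 0}f(m)\bigl(\omega(k)+\omega(k-m)+\omega(k-2m)+\cdots\bigr)$ becomes $\sum_{m+k=n;\,m\geq 1;\,k\geq 0}\lambda(m)\bigl(\omega(k)+\omega(k-m)+\cdots\bigr)$. Equating the two via Theorem~\ref{etr} yields precisely the claimed identity. So the proof is genuinely a one-line specialization; I would present it as such.

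There is essentially no obstacle here: the only thing to verify is that the hypothesis of Theorem~\ref{etr} is met, namely that $g$ is the divisor sum of $f$, and that is guaranteed by the identity $\sum_{d\mid n}\lambda(d)=\delta_s(n)$ (which follows from $\lambda$ being completely multiplicative with $\lambda(p^a)=(-1)^a$, so the divisor sum over a prime power $p^a$ telescopes to $1$ if $a$ is even and $0$ if $a$ is odd, and multiplicativity extends this to $\delta_s(n)$). One might add a sentence noting that $\delta_s$ here plays the role previously occupied by the ``known simple expression'' side described in the Remark after Theorem~\ref{etr}: because $\delta_s$ requires no factorization to evaluate, this particular instance gives a recurrence for the arithmetically subtle function $\lambda$ in terms of an elementary left-hand side.

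Thus the write-up would simply read: ``Apply Theorem~\ref{etr} with $f=\lambda$; since $\sum_{d\mid n}\lambda(d)=\delta_s(n)$, the function $g$ in the statement equals $\delta_s$, and~(\ref{etr-formula}) becomes the asserted identity.'' I expect the author's proof to be exactly this.
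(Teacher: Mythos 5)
Your proposal is correct and matches the paper exactly: the author states the theorem as an immediate consequence of Theorem~\ref{etr} applied with $f=\lambda$ and $g=\delta_s$ via the classical identity $\sum_{d\mid n}\lambda(d)=\delta_s(n)$, offering no further argument. Your added verification of that identity via complete multiplicativity is a harmless bonus the paper omits.
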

From the following well-known relation:
\begin{equation}
\sum_{d|n}\mu(d)=\begin{cases}
1&\text{ if }n=1\\
0&\text{ otherwise},
\end{cases}
\end{equation}
we have the following recurrence relation for the M\"{o}bius function $\mu(n)$. 
\begin{theorem}
For $n>1$, we have
\begin{equation}
\omega(n-1)=\sum_{m+k=n; m\geq 1;k\geq 0}\mu(m)(\omega(k)+\omega(k-m)+\cdots ).
\end{equation}
\end{theorem}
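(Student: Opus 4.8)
The plan is to derive the statement as an immediate corollary of Theorem~\ref{etr}, specialising the arithmetical function $f$ to the Möbius function $\mu$. The one ingredient needed is the classical Möbius identity $\sum_{d\mid n}\mu(d)=1$ if $n=1$ and $0$ otherwise. In the language of Theorem~\ref{etr}, this says precisely that the divisor-sum $g$ attached to $f=\mu$ is the indicator of the singleton $\{1\}$: $g(1)=1$ and $g(k)=0$ for every $k\ge 2$.

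Substituting $f=\mu$ (and hence this $g$) into (\ref{etr-formula}), the right-hand side is, verbatim, $\sum_{m+k=n;\,m\ge 1;\,k\ge 0}\mu(m)\bigl(\omega(k)+\omega(k-m)+\omega(k-2m)+\cdots\bigr)$, which is the claimed expression. On the left-hand side we have $\sum_{k=1}^{n}g(k)\,\omega(n-k)$; since $g$ vanishes except at $k=1$, this sum has at most one non-zero term, namely $g(1)\,\omega(n-1)=\omega(n-1)$, and that term is genuinely present because the hypothesis $n>1$ places the index $k=1$ inside the summation range $1\le k\le n$. Equating the two sides yields the asserted recurrence.

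There is essentially no obstacle here; the argument is a direct specialisation. The only point deserving a line of care is the summation bookkeeping: one must note that it is exactly the restriction $n>1$ that makes $k=1$ admissible, so the left-hand side collapses to $\omega(n-1)$ rather than to an empty sum. (For $n=1$ the same computation degenerates to the tautology $\omega(0)=\omega(0)$, which is why the statement is phrased for $n>1$.) If one prefers to avoid citing Theorem~\ref{etr}, the identical conclusion follows from first principles by inserting the Lambert-series evaluation $\sum_{m\ge 1}\mu(m)\,q^{m}/(1-q^{m})=q$ into the computation carried out in the proof of Theorem~\ref{etr}.
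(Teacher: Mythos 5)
Your proposal is correct and is exactly the paper's argument: substitute the Möbius identity $\sum_{d\mid n}\mu(d)=[n=1]$ into Theorem~\ref{etr}, so the left-hand side collapses to $g(1)\,\omega(n-1)=\omega(n-1)$. One small quibble: the index $k=1$ lies in the range $1\le k\le n$ for every $n\ge 1$, not only for $n>1$, so the hypothesis $n>1$ is not what makes that term appear (indeed, as you note, the identity also holds trivially at $n=1$); this does not affect the validity of the proof.
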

\subsection{ Relatively prime (distinct) partitions and compositions}
\begin{theorem}Let $p_\psi(n)$ be the number of relatively prime partitions of $n$. We have
\begin{equation}
-\omega(n)=\sum_{m+k=n; m\geq 1;k\geq 0}p_\psi(m)(\omega(k)+\omega(k-m)+\cdots ).
\end{equation}
\begin{proof}
Mohamed El Bachraoui \cite{moh} noted that 
\[p(n)=\sum_{d|n}p_\psi(d).
\]
Keeping this observation, the result follows as a consequence of Theorem \ref{etr} and recurrence relation \ref{pr}.
\end{proof}
\end{theorem}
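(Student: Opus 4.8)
The plan is to apply Theorem \ref{etr} directly, with the choices $g=p$ and $f=p_\psi$, where $p$ is the ordinary partition function. This specialization is legitimate precisely because of El Bachraoui's identity $p(n)=\sum_{d|n}p_\psi(d)$, which is exactly the hypothesis $g(n)=\sum_{d|n}f(d)$ demanded by Theorem \ref{etr}. Substituting into (\ref{etr-formula}) gives at once
\[
\sum_{k=1}^{n}p(k)\,\omega(n-k)=\sum_{m+k=n;\,m\geq 1;\,k\geq 0}p_\psi(m)\bigl(\omega(k)+\omega(k-m)+\omega(k-2m)+\cdots\bigr),
\]
so the entire problem reduces to evaluating the left-hand side.

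For that left-hand side I would invoke Euler's classical recurrence (\ref{pr}), which states $\sum_{k=0}^{n}\omega(k)p(n-k)=0$ for every $n\geq 1$, equivalently $\sum_{k=0}^{n}p(k)\,\omega(n-k)=0$. Peeling off the $k=0$ term and using $p(0)=1$ yields $\sum_{k=1}^{n}p(k)\,\omega(n-k)=-p(0)\,\omega(n)=-\omega(n)$. Combining this with the displayed identity produces exactly the asserted relation
\[
-\omega(n)=\sum_{m+k=n;\,m\geq 1;\,k\geq 0}p_\psi(m)\bigl(\omega(k)+\omega(k-m)+\cdots\bigr).
\]

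I do not expect a genuine obstacle: the argument is a two-line specialization of Theorem \ref{etr} followed by a use of (\ref{pr}). The only points that warrant a moment's care are (i) matching the index ranges — Theorem \ref{etr} sums $k$ from $1$ to $n$, whereas (\ref{pr}) sums from $0$, so one must correctly extract the single missing term $p(0)\omega(n)$ — and (ii) recalling the convention $\omega(j)=0$ for $j<0$ (as in Lemma \ref{l1}), which is what makes each inner sum $\omega(k)+\omega(k-m)+\omega(k-2m)+\cdots$ a finite sum. If a self-contained treatment were wanted, one could also re-derive El Bachraoui's identity directly: every partition of $n$ has a well-defined gcd $d$ dividing $n$, and dividing all parts by $d$ is a bijection between partitions of $n$ whose parts have gcd $d$ and relatively prime partitions of $n/d$, whence $p(n)=\sum_{d|n}p_\psi(n/d)=\sum_{d|n}p_\psi(d)$. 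For the present purpose, however, citing \cite{moh} is enough.
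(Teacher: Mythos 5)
Your proposal is correct and follows exactly the paper's route: specialize Theorem \ref{etr} with $g=p$, $f=p_\psi$ via El Bachraoui's identity, then evaluate the left-hand side using Euler's recurrence (\ref{pr}) after separating the $k=0$ term $p(0)\omega(n)$. You have merely written out the details the paper leaves implicit, and your index bookkeeping is accurate.
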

\begin{theorem}\label{qrr}
Let $q_\psi(n)$ be the number of relatively prime distinct partitions of $n$. We have
\begin{equation}
\sum_{m+k=n; m\geq 1;k\geq 0}q_\psi(m)(\omega(k)+\omega(k-m)+\cdots )=\begin{cases}-\omega(n)&\text{ if $n$ is odd;}\\
-\omega(n)+\omega(\frac{n}{2})&\text{ if $n$ is even.}
\end{cases}
\end{equation}
\end{theorem}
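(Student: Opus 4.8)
The plan is to mimic the derivation of Theorem \ref{qrr}'s sibling results by combining Theorem \ref{etr} with a known divisor-sum identity and a known Euler-type recurrence. The key fact I would invoke is the analogue of El Bachraoui's identity for distinct partitions, namely
\[
q(n)=\sum_{d\mid n}q_\psi(d),
\]
which follows by the usual grouping: every distinct partition of $n$ is obtained from a relatively prime distinct partition of some $d\mid n$ by scaling all parts by $n/d$, and scaling preserves distinctness. With $g=q$ and $f=q_\psi$ in Theorem \ref{etr}, the right-hand side of (\ref{etr-formula}) becomes exactly the sum $\sum_{m+k=n;\,m\geq 1;\,k\geq 0}q_\psi(m)\bigl(\omega(k)+\omega(k-m)+\cdots\bigr)$ appearing in the statement, while the left-hand side becomes $\sum_{k=1}^{n}q(k)\omega(n-k)$.

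The second ingredient is the already-recorded Euler-type recurrence (\ref{qetr}) for $q(n)$:
\[
\sum_{k=0}^{n}\omega(k)q(n-k)=\begin{cases}\omega(n/2)&\text{if }n\text{ is even,}\\ 0&\text{otherwise.}\end{cases}
\]
Splitting off the $k=n$ term of this sum, which is $\omega(n)q(0)=\omega(n)$, isolates $\sum_{k=1}^{n}q(k)\omega(n-k)=\sum_{k=0}^{n-1}q(k)\omega(n-k)$; reindexing shows this equals $-\omega(n)$ when $n$ is odd and $-\omega(n)+\omega(n/2)$ when $n$ is even. Substituting this evaluation for the left-hand side of Theorem \ref{etr} yields the claimed formula.

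The only point requiring care — and the part I would expect to be the main (though still mild) obstacle — is verifying the divisor-sum identity $q(n)=\sum_{d\mid n}q_\psi(d)$ and the bookkeeping of the $k=n$ boundary term when passing between the index ranges $\sum_{k=1}^{n}$ and $\sum_{k=0}^{n}$, together with the convention $\omega(m)=0$ for $m<0$ (so that the tails $\omega(k-m)+\omega(k-2m)+\cdots$ terminate). Everything else is a direct substitution into Theorem \ref{etr} and (\ref{qetr}), with no new analytic input needed.
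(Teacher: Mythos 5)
Your proposal is correct and follows essentially the same route as the paper: apply Theorem \ref{etr} with $g=q$ and $f=q_\psi$ via the divisor-sum identity $q(n)=\sum_{d\mid n}q_\psi(d)$, then evaluate the resulting left-hand side using the recurrence (\ref{qetr}) after peeling off the $q(0)$ boundary term $\omega(n)$. The only difference is that you spell out the scaling bijection behind the divisor-sum identity and the index bookkeeping, both of which the paper leaves implicit.
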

\begin{proof}
Using the recurrence relation (\ref{qetr}) in Theorem \ref{etr}, together with the following observation:
\[q(n)=\sum_{d|n}q_\psi(d),
\]
completes the proof. 
\end{proof}
\begin{theorem}
Let $c_\psi(n)$ be the number of relatively prime compositions of $n$. We have
\begin{equation}
\sum_{k=1}^{n}2^{k-1}\omega(n-k)=\sum_{m+k=n; m\geq 1;k\geq 0}c_\psi(m)(\omega(k)+\omega(k-m)+\cdots ).
\end{equation}
\end{theorem}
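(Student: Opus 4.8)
The plan is to reduce this to Theorem~\ref{etr} in exactly the manner of the three preceding corollaries, so the only genuine work is to identify the arithmetical function $g$ for which $c_\psi$ plays the role of the function $f$ in Theorem~\ref{etr}. I would take $f(m)=c_\psi(m)$ and $g(k)=2^{k-1}$, and the whole proof hinges on verifying that $g(k)=\sum_{d\mid k}f(d)$, i.e.
\[
2^{n-1}=\sum_{d\mid n}c_\psi(d).
\]

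First I would recall the classical fact that the total number $c(n)$ of compositions of a positive integer $n$ equals $2^{n-1}$; this is the standard ``bars'' count, choosing to insert or omit a divider in each of the $n-1$ gaps between $n$ unit cells. Next I would establish the divisor-sum identity above by partitioning compositions according to the greatest common divisor of their parts, in the same spirit as El Bachraoui's observation $p(n)=\sum_{d\mid n}p_\psi(d)$ used earlier. Concretely, every composition $\pi=(a_1,\dots,a_k)$ of $n$ has a well-defined part-gcd $d=\gcd(a_1,\dots,a_k)$, which divides $n$; dividing each part by $d$ yields a relatively prime composition of $n/d$, and this is a bijection from the set of compositions of $n$ with part-gcd $d$ onto the set of relatively prime compositions of $n/d$, the inverse being multiplication of every part by $d$. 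Grouping the $2^{n-1}$ compositions of $n$ by their part-gcd $d$ and reindexing $d\mapsto n/d$ gives $2^{n-1}=c(n)=\sum_{d\mid n}c_\psi(n/d)=\sum_{d\mid n}c_\psi(d)$.

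Having the relation $2^{k-1}=\sum_{d\mid k}c_\psi(d)$ in hand, I would substitute $f(m)=c_\psi(m)$ and $g(k)=2^{k-1}$ into formula~(\ref{etr-formula}) of Theorem~\ref{etr}; the left-hand side becomes $\sum_{k=1}^{n}2^{k-1}\omega(n-k)$ and the right-hand side becomes $\sum_{m+k=n;\,m\geq1;\,k\geq0}c_\psi(m)\bigl(\omega(k)+\omega(k-m)+\cdots\bigr)$, which is precisely the asserted recurrence (with the convention $\omega(j)=0$ for $j<0$ already fixed in Lemma~\ref{l1}, and understood for $n\geq1$).

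I do not expect a real obstacle here: the statement is a direct specialization of Theorem~\ref{etr}, and the only point requiring more than one sentence of justification is the bijection behind $2^{n-1}=\sum_{d\mid n}c_\psi(d)$ — namely that scaling all parts of a relatively prime composition of $n/d$ by the factor $d$ is the two-sided inverse of passing to the part-gcd. If anything, one should double-check small cases (e.g.\ $n=2$: $c_\psi(1)+c_\psi(2)=1+1=2=2^{1}$) to make sure the indexing of $c_\psi$ is consistent with the convention $c_\psi(1)=1$.
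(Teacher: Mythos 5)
Your proposal is correct and follows essentially the same route as the paper: take $f=c_\psi$, $g(k)=2^{k-1}$, and apply Theorem~\ref{etr} via the identities $c(n)=2^{n-1}$ and $c(n)=\sum_{d\mid n}c_\psi(d)$. The only difference is that the paper simply cites Gould for the divisor-sum identity, whereas you supply the (correct) gcd-partitioning bijection proving it.
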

\begin{proof}
Let $c(n)$ be the number of compositions of $n$. Gould\cite{gould} noted that
\begin{equation}
c(n)=\sum_{d|n}c_\psi(d)
\end{equation}
and
\begin{equation}\label{comp-1}
c(n)=2^{n-1}.
\end{equation}
Now the result follows while presenting the above relations in Theorem \ref{etr}.
\end{proof}
\begin{theorem}\label{restrictedcpsi}
Let $c_\psi(n,r)$ be the number of relatively prime compositions of $n$ with exactly $r$ parts. We have
\begin{equation}
\sum_{k=1}^{n}{{k-1}\choose{r-1}}\omega(n-k)=\sum_{m+k=n; m\geq 1;k\geq 0}c_\psi (m,r)(\omega(k)+\omega(k-m)+\cdots ).
\end{equation}
\end{theorem}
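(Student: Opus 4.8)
The plan is to apply Theorem \ref{etr} directly with the arithmetical function $g(n) = c_\psi(n,r)$, so that the left-hand side of (\ref{etr-formula}) becomes $\sum_{k=1}^n g(k)\omega(n-k)$ and we must identify the divisor sum $\sum_{d|n} c_\psi(n,r)$ with a closed form that does not involve factorization. First I would recall the elementary fact that the number of compositions of $n$ into exactly $r$ positive parts is $\binom{n-1}{r-1}$, since such a composition corresponds to choosing $r-1$ of the $n-1$ ``gaps'' between consecutive units in a row of $n$ dots. The key combinatorial step is then the identity $\binom{n-1}{r-1} = \sum_{d|n} c_\psi(d,r)$: every composition of $n$ with $r$ parts has a well-defined gcd $d$, and dividing each part by $d$ produces a relatively prime composition of $n/d$ with $r$ parts; conversely scaling a relatively prime composition of any divisor $m$ of $n$ by $n/m$ yields a composition of $n$ with $r$ parts and gcd $n/m$. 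This bijection gives the divisor-sum relation.

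With $g(n) = c_\psi(n,r)$ and $f(n) = \binom{n-1}{r-1}$... wait, that is backwards: in Theorem \ref{etr} we have $g(n) = \sum_{d|n} f(d)$, so here $g(n) = \binom{n-1}{r-1}$ is the known expression and $f(n) = c_\psi(n,r)$ is the function built from factorization. Then the left-hand side of (\ref{etr-formula}) is $\sum_{k=1}^n \binom{k-1}{r-1}\omega(n-k)$ and the right-hand side is $\sum_{m+k=n;\,m\geq 1;\,k\geq 0} c_\psi(m,r)\bigl(\omega(k) + \omega(k-m) + \omega(k-2m) + \cdots\bigr)$, which is exactly the claimed formula. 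So the proof is a one-line invocation of Theorem \ref{etr} once the divisor-sum identity is in hand — this matches the style of the preceding theorems in the subsection, each of which simply substitutes a known divisor-sum relation (for $c(n)$, $p(n)$, $q(n)$, etc.) into Theorem \ref{etr}.

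The only real obstacle is establishing $\binom{n-1}{r-1} = \sum_{d|n} c_\psi(d,r)$ cleanly, and even this is standard: it is the ``gcd-stratification'' of compositions of fixed length, entirely parallel to Gould's relation $c(n) = \sum_{d|n} c_\psi(d)$ cited earlier in the excerpt, just refined by the number of parts. I would state it as a preliminary observation (or cite it as a known refinement of Gould's identity), note that $c(n,r) = \binom{n-1}{r-1}$, and then conclude. No delicate estimates or generating-function manipulations beyond what Theorem \ref{etr} already packages are needed; the Lambert-series and pentagonal-number machinery is entirely absorbed into the black-box application of Theorem \ref{etr}.

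\begin{proof}
Let $c(n,r)$ denote the number of compositions of $n$ into exactly $r$ parts; by a standard stars-and-bars argument, $c(n,r) = \binom{n-1}{r-1}$. Stratifying the compositions of $n$ with $r$ parts according to their gcd $d$ (necessarily a divisor of $n$), and dividing each part by $d$, gives a bijection onto the relatively prime compositions of $n/d$ with $r$ parts; hence
\[
\binom{n-1}{r-1} = c(n,r) = \sum_{d|n} c_\psi\!\left(\tfrac{n}{d},r\right) = \sum_{d|n} c_\psi(d,r).
\]
Applying Theorem \ref{etr} with $g(n) = \binom{n-1}{r-1}$ and $f(n) = c_\psi(n,r)$ yields the stated recurrence relation.
\end{proof}
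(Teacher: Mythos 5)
Your proof is correct and follows the same route as the paper: the paper likewise combines Catalan's formula $c(n,r)=\binom{n-1}{r-1}$ with Gould's divisor-sum relation $c(n,r)=\sum_{d|n}c_\psi(d,r)$ and then invokes Theorem \ref{etr} with $g(n)=\binom{n-1}{r-1}$ and $f(n)=c_\psi(n,r)$. The only difference is that you supply the gcd-stratification argument for the divisor-sum identity, whereas the paper simply cites Gould.
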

\begin{proof}
Let $c(n,r)$ be the number of compositions of $n$ with exactly $r$ parts. Gould  \cite{gould} observed that,
\begin{equation}
c(n,r)=\sum_{d|n}c_\psi(d,r).
\end{equation}
Catalan \cite{catalan} counted that,
\begin{equation}\label{comp-2}
c(n,r)={{n-1}\choose{r-1}}.
\end{equation}
Now the result follows from Theorem \ref{etr}.
\end{proof}
\subsection{ Representations as sum of squares}
Let $r_k(n)$ be the number of ways integer $n$ can be represented as a sum of $k$ squares. The following result due to Jacobi \cite{jacobi2} puts $r_2(n)$, $r_4(n)$ and $r_8(n)$ separately as a divisor sum of simple functions. This representation paves the way for deriving Euler-type recurrence relations for $r_i(n)$ $(i=2,4,8)$ with the aid of Theorem \ref{etr}.  
\begin{lemma}[Jacobi]
We have
\begin{enumerate}
\item[(a)]
\[\frac{r_2(n)}{4}=\sum_{d|n}\eta_1(d),
\]
where 
\[\eta_1(n)=\begin{cases}
0&\text{ if }n\text{ is even}\\
(-1)^{\frac{n-1}{2}}&\text{ if }n\text{ is odd},
\end{cases}
\]
\item[(b)]
\[r_4(n)=8\sum_{d|n}\eta_2(d),
\]
where 
\[\eta_2(n)=\begin{cases}
0&\text{ if }4|n\\
n&\text{ otherwise},
\end{cases}
\]
\item[(c)]
\[r_8(n)=16\sum_{d|n}(-1)^{n+d}d^3.
\]
\end{enumerate}
\end{lemma}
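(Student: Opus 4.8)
The plan is to pass to the classical generating function for sums of squares and reduce each of (a)--(c) to a Lambert--series identity for a power of Jacobi's theta constant. Write $\theta(q)=\sum_{n=-\infty}^{\infty}q^{n^2}$; from the very definition of $r_k(n)$ one has $\theta(q)^{k}=\sum_{n\ge 0}r_k(n)q^n$. Since $\frac{q^m}{1-q^m}=\sum_{j\ge 1}q^{jm}$, every Lambert series $\sum_{m\ge 1}c(m)\frac{q^m}{1-q^m}$ has $q^N$--coefficient $\sum_{d\mid N}c(d)$, and likewise $\frac{m^3q^m}{1-(-q)^m}=m^3\sum_{i\ge 1}(-1)^{(i-1)m}q^{im}$ contributes, to $q^N$, the amount $(-1)^{N-m}m^3=(-1)^{N+m}m^3$ for each $m\mid N$. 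Hence statements (a), (b), (c) are equivalent, respectively, to the three theta identities
\[
\theta(q)^{2}=1+4\sum_{m\ge 1}\eta_1(m)\frac{q^m}{1-q^m},\qquad
\theta(q)^{4}=1+8\sum_{m\ge 1}\eta_2(m)\frac{q^m}{1-q^m},
\]
\[
\theta(q)^{8}=1+16\sum_{m\ge 1}\frac{m^{3}q^{m}}{1-(-q)^{m}},
\]
and the lemma is proved once these are established and the coefficient of $q^N$ is read off as just described.

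For (a) and (b) I would argue elementarily, in the spirit of Section 1. Invoking the Jacobi triple product identity one writes $\theta(q)$ as an infinite product, and Gauss's identity $(\ref{gauss2})$ supplies the product form of the companion constant $\sum_{n}(-1)^nq^{n^2}$; together with the third theta constant (again from the triple product) and the quadratic relations among the three, one expresses $\theta(q)^2$ and $\theta(q)^4$ as products. Applying the operator $q\frac{d}{dq}\log$ and the Lambert representation $\sum_{m}m\frac{q^m}{1-q^m}$ — exactly the mechanism that produced $(\ref{sr})$ — converts these products into the Lambert series displayed above, which gives parts (a) and (b). An equivalent route is to recognise the right--hand sides as the Fourier expansions arising from the partial--fraction decompositions of $\mathrm{sn}$, $\mathrm{dn}$ and their squares; either way $k=2$ and $k=4$ are routine.

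The main obstacle is part (c): the eighth power. The cleanest finish is modular — $\theta(q)^{8}$ is a modular form of weight $4$ on $\Gamma_0(4)$, and in this low weight and level no cusp forms intervene, so the relevant space is finite--dimensional and spanned by explicit Eisenstein series whose $q$--expansions are Lambert series of precisely the shape above; writing $\theta(q)^8$ in that basis and comparing a bounded number of coefficients forces the third identity. If one insists on staying inside the present elementary framework, one can instead square the four--square identity just obtained and use the convolution $r_8(n)=\sum_{j=0}^{n}r_4(j)r_4(n-j)$, but this forces on us an auxiliary identity expressing the convolution of the $\sum_{d\mid n}\eta_2(d)$--type sums in terms of $\sum_{d\mid n}(-1)^{n+d}d^3$ (a Liouville/Ramanujan--style quadratic relation among divisor sums), which is itself nontrivial; that convolution identity is the delicate, bookkeeping--heavy step I expect to be the real work. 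Once the eighth theta identity is secured, extracting the coefficient of $q^N$ completes (c). Throughout I would verify the normalizing constants $4$, $8$, $16$ and the parity conventions in $\eta_1,\eta_2$ and $(-1)^{n+d}$ against the small cases $n=1,2,3,4,5$.
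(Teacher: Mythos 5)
The paper does not prove this lemma at all: it is quoted as a classical result of Jacobi with a citation to the \emph{Fundamenta nova}, and is then fed into Theorem \ref{two-squares} as a black box. So there is no in-paper argument to compare against, and the question is only whether your proposal would stand on its own. Your reduction step is sound: passing to $\theta(q)^k=\sum r_k(n)q^n$ and reading off $q^N$-coefficients of the three Lambert series is correct, and your sign bookkeeping for (c) (that $\frac{m^3q^m}{1-(-q)^m}$ contributes $(-1)^{N+m}m^3$ for each $m\mid N$) checks out.

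However, the proposal has two genuine gaps. First, the proposed mechanism for (a) and (b) cannot work as described: applying $q\frac{d}{dq}\log$ to a product representation of $\theta(q)^2$ produces the \emph{logarithmic derivative} $q\,(\theta^2)'/\theta^2$ as a Lambert series — i.e., a convolution recurrence of the type (\ref{sr}) — not an identity expressing $\theta(q)^2$ itself as $1+4\sum\eta_1(m)\frac{q^m}{1-q^m}$. Indeed a log-derivative always yields divisor sums of the shape $\sum_{d\mid n}a_d\,d$ with $a_d$ the exponent sequence of the product, and $\sum_{d\mid n}\eta_1(d)=d_1(n)-d_3(n)$ is not of that shape. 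The route that actually works for $k=2,4$ is the one you mention only as an ``equivalent'' aside: the partial-fraction (Fourier) expansions of the Jacobi elliptic functions, or a Liouville/Gauss-sum argument; that is where the content lies and it is not carried out. Second, part (c) is explicitly left unproven: you name two possible finishes (a dimension count for weight-$4$ forms on $\Gamma_0(4)$, or squaring the four-square identity and establishing a quadratic relation among divisor sums) but execute neither, and you yourself flag the convolution identity as ``the real work.'' As it stands the proposal is a correct reduction plus a plan, not a proof; given that the paper itself treats the lemma as a citation, the cleanest fix is either to do likewise or to commit to and complete one of the two routes for each part.
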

Now, in accordance with Theorem \ref{etr}, we  have the following result. 
\begin{theorem}\label{two-squares}
We have
\begin{enumerate}
\item[(a)]
\begin{equation}
\sum_{k=1}^{n}r_2(k)\omega(n-k)={4}\left(\sum_{m+k=n; m\geq 1;k\geq 0}\eta_1(m)(\omega(k)+\omega(k-m)+\cdots )\right),
\end{equation}
\item[(b)]
\begin{equation}
\sum_{k=1}^{n}r_4(k)\omega(n-k)={8}\left(\sum_{m+k=n; m\geq 1;k\geq 0}\eta_2(m)(\omega(k)+\omega(k-m)+\cdots )\right),
\end{equation}
\item[(c)]
\begin{equation}
\sum_{k=1}^{n}\frac{r_8(k)}{(-1)^k}\omega(n-k)={16}\left(\sum_{m+k=n; m\geq 1;k\geq 0}(-1)^mm^3(\omega(k)+\omega(k-m)+\cdots )\right).
\end{equation}
\end{enumerate}
\end{theorem}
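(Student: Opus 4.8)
The plan is to apply Theorem~\ref{etr} three times, once for each of the three divisor-sum identities supplied by Jacobi's Lemma, and simply read off the resulting recurrence. Recall that Theorem~\ref{etr} says: if $g(n)=\sum_{d\mid n}f(d)$ then $\sum_{k=1}^n g(k)\omega(n-k)=\sum_{m+k=n;\,m\ge 1;\,k\ge 0} f(m)\bigl(\omega(k)+\omega(k-m)+\omega(k-2m)+\cdots\bigr)$. So the entire task reduces to identifying, in each of the three cases, a function $g$ and a function $f$ with $g=\sum_{d\mid\cdot}f$, plugging in, and pulling any constant factor out of the sum.

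For part (a): by Jacobi's Lemma (a), $\tfrac14 r_2(n)=\sum_{d\mid n}\eta_1(d)$, so I take $g(n)=\tfrac14 r_2(n)$ and $f(n)=\eta_1(n)$. Theorem~\ref{etr} then gives $\sum_{k=1}^n \tfrac14 r_2(k)\,\omega(n-k)=\sum_{m+k=n}\eta_1(m)(\omega(k)+\omega(k-m)+\cdots)$; multiplying through by $4$ yields the stated identity. For part (b): Jacobi's Lemma (b) gives $\tfrac18 r_4(n)=\sum_{d\mid n}\eta_2(d)$, so take $g(n)=\tfrac18 r_4(n)$, $f(n)=\eta_2(n)$, apply Theorem~\ref{etr}, and multiply by $8$. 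For part (c): Jacobi's Lemma (c) gives $\tfrac{1}{16}r_8(n)=\sum_{d\mid n}(-1)^{n+d}d^3$. Here one must be slightly careful, because the summand $(-1)^{n+d}d^3$ depends on $n$ as well as on $d$, so it is not literally of the form $f(d)$. The remedy is the standard factorization $(-1)^{n+d}=(-1)^n(-1)^d$, hence $\tfrac{1}{16}r_8(n)=(-1)^n\sum_{d\mid n}(-1)^d d^3$, i.e. $\tfrac{1}{16}(-1)^n r_8(n)=\sum_{d\mid n}(-1)^d d^3$. Thus I take $g(n)=\tfrac{1}{16}(-1)^n r_8(n)=\tfrac{1}{16}\,r_8(n)/(-1)^n$ and $f(n)=(-1)^n n^3$, feed these into Theorem~\ref{etr}, and multiply by $16$; this produces exactly the claimed formula $\sum_{k=1}^n \frac{r_8(k)}{(-1)^k}\omega(n-k)=16\sum_{m+k=n}(-1)^m m^3(\omega(k)+\omega(k-m)+\cdots)$ once one notes $1/(-1)^k=(-1)^k$.

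There is essentially no obstacle here: the theorem is a black box and Jacobi's Lemma is quoted, so the only thing that requires a moment's thought is the parity-splitting trick in part (c) that converts the $n$-dependent divisor summand into a genuine arithmetical function of $d$ times a global factor $(-1)^n$. I would write the proof as a single short paragraph: state ``Applying Theorem~\ref{etr} with the divisor-sum identities of Jacobi's Lemma --- using $(-1)^{n+d}=(-1)^n(-1)^d$ in case (c) --- and clearing the constant factors gives (a), (b) and (c) respectively,'' perhaps with the intermediate line for (c) made explicit so the reader sees why the $(-1)^k$ migrates to the left-hand side. No induction, no generating-function manipulation, and no estimation is needed, since all of that work was already done in the proof of Theorem~\ref{etr} and in Jacobi's classical formulas.
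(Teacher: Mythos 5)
Your proposal is correct and matches the paper's (implicit) argument exactly: the paper presents Theorem~\ref{two-squares} as an immediate consequence of Theorem~\ref{etr} applied to Jacobi's three divisor-sum identities, with the constants cleared. Your explicit handling of the factorization $(-1)^{n+d}=(-1)^n(-1)^d$ in part (c) is the one step the paper leaves unstated, and you have done it correctly.
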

\subsection{ Subsets relatively prime to a positive integer}
Melvyn B. Nathanson \cite{nathan} denoted the number of nonempty subsets and the number of subsets of cardinality $r$ of $\left\{1,2,\ldots,n\right\}$ such that  greatest common divisor of each subset is relatively prime to $n$, respectively, by $\Phi(n)$ and $\Phi_r(n)$; and he showed that 
\begin{equation}\label{rps-1}
\sum_{d|n}\Phi(d)=2^n-1
\end{equation}
and 
\begin{equation}\label{rps-2}
\sum_{d|n}\Phi_r(d)={{n}\choose{r}}.
\end{equation}
Presenting these identities appropriately in  Theorem \ref{etr} lead to the following result.
\begin{theorem}
We have
\begin{enumerate}
\item[(a)]
\begin{equation}
\sum_{k=1}^{n}(2^k-1)\omega(n-k)=\sum_{m+k=n; m\geq 1;k\geq 0}\Phi(m)(\omega(k)+\omega(k-m)+\cdots ),
\end{equation}
\item[(b)]
\begin{equation}
\sum_{k=1}^{n}{{k}\choose{r}}\omega(n-k)=\sum_{m+k=n; m\geq 1;k\geq 0}\Phi_r(m)(\omega(k)+\omega(k-m)+\cdots ).
\end{equation}
\end{enumerate}
\end{theorem}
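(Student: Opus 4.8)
The plan is to apply Theorem \ref{etr} directly, once for each part, with the appropriate choice of the pair $(f,g)$. The entire content of the statement is that the functions $2^k-1$ and $\binom{k}{r}$ admit the divisor-sum representations required by the hypothesis of Theorem \ref{etr}, and this is precisely what Nathanson's identities \eqref{rps-1} and \eqref{rps-2} supply.

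For part (a), set $f=\Phi$ and $g(n)=2^n-1$. Identity \eqref{rps-1} reads $\sum_{d\mid n}\Phi(d)=2^n-1$, i.e.\ $g(n)=\sum_{d\mid n}f(d)$, which is exactly the relation between $g$ and $f$ demanded in Theorem \ref{etr}. Substituting $g(k)=2^k-1$ on the left-hand side of \eqref{etr-formula} and $f(m)=\Phi(m)$ on the right-hand side yields the displayed equation. For part (b), set $f=\Phi_r$ and $g(n)=\binom{n}{r}$; identity \eqref{rps-2} gives $g(n)=\sum_{d\mid n}f(d)$, and Theorem \ref{etr} then produces the second displayed equation verbatim after the substitutions $g(k)=\binom{k}{r}$ and $f(m)=\Phi_r(m)$.

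There is essentially no obstacle here: the only thing one must check is that Nathanson's two identities are of the shape $g=\sum_{d\mid\cdot}f$ with $f$ the quantity for which a recurrence is sought, and both are. Thus the theorem is an immediate corollary of Theorem \ref{etr} together with \eqref{rps-1} and \eqref{rps-2}, in exact parallel with the earlier corollaries of this section (for instance the $c_\psi(n)$ and $c_\psi(n,r)$ theorems, which use Gould's and Catalan's identities in the same way).
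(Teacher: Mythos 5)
Your proposal is correct and is exactly the argument the paper uses: the theorem is stated there as an immediate consequence of Theorem \ref{etr} applied with $g(n)=2^n-1$, $f=\Phi$ for (a) and $g(n)=\binom{n}{r}$, $f=\Phi_r$ for (b), via Nathanson's identities \eqref{rps-1} and \eqref{rps-2}. Nothing further is needed.
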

Now we replace the set $\left\{1,2,\ldots,n\right\}$ in the above definition with the set of positive divisors of $n$. Here too the pattern of the above identities retains.
\begin{definition}
Let $n$ be a positive integer. Denote by $\Phi^\tau(n)$ \emph{(}resp. $\Phi^\tau_r(n)$\emph{)}, the number of nonempty subsets \emph{(}resp. the number of subsets of cardinality $r$\emph{)} of the set of positive divisors of $n$ such that greatest common divisor of each subset is relatively prime to $n$.
\end{definition}
\begin{lemma}
We have
\begin{enumerate}
\item[(a)]
\begin{equation}
\sum_{d|n}\Phi^\tau(d)=2^{\tau(n)}-1,
\end{equation}
\item[(b)]
\begin{equation}
\sum_{d|n}\Phi_r^\tau(d)={{\tau(n)}\choose{r}}.
\end{equation}
\end{enumerate}
\end{lemma}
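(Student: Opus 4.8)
The plan is to run the grouping-by-gcd argument that (presumably) lies behind Nathanson's identities (\ref{rps-1}) and (\ref{rps-2}), now with the role of $\{1,\dots,n\}$ played by $D(n)$, the set of positive divisors of $n$, which has $|D(n)|=\tau(n)$ elements; in particular $D(n)$ has $2^{\tau(n)}-1$ nonempty subsets and $\binom{\tau(n)}{r}$ subsets of cardinality $r$. For a nonempty $S\subseteq D(n)$ write $g=\gcd(S)$. The first thing I would record is the \emph{structural fact} that $g\mid n$: the integer $g$ divides every member of $S$, and every member of $S$ divides $n$, so $g\mid n$ by transitivity. (This is exactly where divisor sets are better behaved than $\{1,\dots,n\}$, for which the gcd of a subset need not divide $n$.) An immediate consequence is that, for subsets of $D(m)$, the requirement ``$\gcd$ relatively prime to $m$'' coincides with ``$\gcd$ equal to $1$'', since a divisor of $m$ coprime to $m$ must be $1$; hence $\Phi^\tau(m)$ counts the nonempty subsets of $D(m)$ of gcd $1$, and $\Phi^\tau_r(m)$ counts those of cardinality $r$.

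Next I would build the bijection that carries the weight of the proof. Fix $g\mid n$. The divisors of $n$ that are multiples of $g$ are exactly the numbers $ge$ with $e\mid (n/g)$ (because $g\mid m\mid n$ forces $m/g\mid n/g$, and conversely), so there are $\tau(n/g)$ of them. Therefore $S\mapsto\{s/g:s\in S\}$ is a cardinality-preserving bijection from $\{\,\emptyset\neq S\subseteq D(n):\gcd(S)=g\,\}$ onto $\{\,\emptyset\neq T\subseteq D(n/g):\gcd(T)=1\,\}$, with inverse $T\mapsto\{gt:t\in T\}$; here the gcd transforms by $\gcd(S)=g\cdot\gcd(\{s/g:s\in S\})$. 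By the remark above, the target set has $\Phi^\tau(n/g)$ elements, and its $r$-element members number $\Phi^\tau_r(n/g)$.

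Finally I would assemble everything. Since every $g\mid n$ arises as $\gcd(S)$ for some nonempty $S\subseteq D(n)$ (e.g. $S=\{g\}$), partitioning the nonempty subsets of $D(n)$ according to their gcd gives
\[
2^{\tau(n)}-1=\sum_{g\mid n}\#\{\,\emptyset\neq S\subseteq D(n):\gcd(S)=g\,\}=\sum_{g\mid n}\Phi^\tau(n/g)=\sum_{d\mid n}\Phi^\tau(d),
\]
which is part (a); restricting the same partition to $r$-element subsets and using $\#\{S\subseteq D(n):|S|=r\}=\binom{\tau(n)}{r}$ yields part (b). A more computational alternative is to evaluate $\Phi^\tau(n)$ and $\Phi^\tau_r(n)$ directly by inclusion--exclusion over the divisors of $\gcd(S)$, exhibiting $\Phi^\tau$ (resp. $\Phi^\tau_r$) as the M\"obius transform of $n\mapsto 2^{\tau(n)}-1$ (resp. $n\mapsto\binom{\tau(n)}{r}$), so that summing over divisors inverts the transform and recovers the claim. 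The only step requiring genuine care is the structural fact $\gcd(S)\mid n$ for $S\subseteq D(n)$: without it one cannot match the ``coprime to $m$'' count with the ``gcd $=1$'' count, and the blocks of the partition fail to be recognizable as copies of $\Phi^\tau(n/g)$ and $\Phi^\tau_r(n/g)$; everything else is routine bookkeeping about which divisors of $n$ are multiples of $g$.
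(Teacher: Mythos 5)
Your proof is correct. The paper states this lemma without proof, and your argument — observing that $\gcd(S)\mid n$ for any nonempty $S\subseteq D(n)$ (so that ``coprime to $n$'' forces gcd $=1$), then partitioning the $2^{\tau(n)}-1$ nonempty subsets (resp.\ the $\binom{\tau(n)}{r}$ subsets of size $r$) of $D(n)$ by their gcd $g$ and identifying each block with the gcd-$1$ subsets of $D(n/g)$ via division by $g$ — is exactly the divisor-set analogue of Nathanson's proof of (\ref{rps-1}) and (\ref{rps-2}) that the author evidently intends.
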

\begin{theorem}
We have
\begin{enumerate}
\item[(a)]
\begin{equation}
\sum_{k=1}^{n}(2^{\tau(k)}-1)\omega(n-k)=\sum_{m+k=n; m\geq 1;k\geq 0}\Phi^\tau(m)(\omega(k)+\omega(k-m)+\cdots ), 
\end{equation}
\item[(b)]
\begin{equation}
\sum_{k=1}^{n}{{\tau(k)}\choose{r}}\omega(n-k)=\sum_{m+k=n; m\geq 1;k\geq 0}\Phi_r^\tau(m)(\omega(k)+\omega(k-m)+\cdots ).
\end{equation}
\end{enumerate}
\end{theorem}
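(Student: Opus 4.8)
The plan is to apply Theorem~\ref{etr} directly, exactly as in the preceding theorems of this subsection, with $g(k)=2^{\tau(k)}-1$ for part (a) and $g(k)=\binom{\tau(k)}{r}$ for part (b). The only thing that needs to be verified before invoking Theorem~\ref{etr} is that the claimed Dirichlet-type inversion holds, i.e.\ that $2^{\tau(n)}-1=\sum_{d\mid n}\Phi^\tau(d)$ and $\binom{\tau(n)}{r}=\sum_{d\mid n}\Phi_r^\tau(d)$; these are precisely the statements of the lemma immediately above, so for the theorem itself there is nothing further to do beyond substitution. Hence I would write: taking $g(k)=2^{\tau(k)}-1$ and $f(m)=\Phi^\tau(m)$, the relation $g(n)=\sum_{d\mid n}f(d)$ of the lemma lets us feed this pair into \eqref{etr-formula}, which yields part (a); taking instead $g(k)=\binom{\tau(k)}{r}$ and $f(m)=\Phi_r^\tau(m)$ gives part (b) in the same way.

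The substantive content is therefore the lemma, and if a self-contained argument is wanted I would prove it as follows. Fix $n$ and let $D=\{d_1,\dots,d_{\tau(n)}\}$ be the set of positive divisors of $n$. For a nonempty subset $S\subseteq D$, let $e(S)=\gcd(S)$; note $e(S)$ is again a divisor of $n$, and $S$ consists entirely of multiples of $e(S)$ that themselves divide $n$, i.e.\ $S$ is a subset of $e(S)\cdot D'$ where $D'$ is the divisor set of $n/e(S)$. Conversely, writing $S=e\cdot T$ with $T$ a set of divisors of $n/e$, one has $\gcd(S)=e$ exactly when $\gcd(T)$ is relatively prime to\dots\ no: one has $e(S)=e$ exactly when $\gcd(T)=1$. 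So the map $S\mapsto(e(S),\,S/e(S))$ is a bijection between nonempty subsets of $D$ and pairs $(e,T)$ with $e\mid n$ and $T$ a nonempty subset of the divisor set of $n/e$ having $\gcd(T)=1$. The number of such $T$ for a fixed $e$ is, by definition, $\Phi^\tau(n/e)$. Summing over $e\mid n$ and using $2^{\tau(n)}-1$ as the total count of nonempty subsets of $D$ gives $2^{\tau(n)}-1=\sum_{e\mid n}\Phi^\tau(n/e)=\sum_{d\mid n}\Phi^\tau(d)$. The cardinality-$r$ version is identical with "nonempty" replaced by "of size $r$", the total count being $\binom{\tau(n)}{r}$ and the count of valid $T$ for fixed $e$ being $\Phi_r^\tau(n/e)$.

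The one point deserving care is the claim that $\gcd(e\cdot T)=e$ iff $\gcd(T)=1$: this is just $\gcd(e t_1,\dots,e t_j)=e\gcd(t_1,\dots,t_j)$, a standard property of $\gcd$, so there is no real obstacle there. I also need that every $d$ appearing as $e(S)$ actually divides $n$ (clear, since each element of $S$ divides $n$) and that the map is well defined in both directions (the set of divisors of $n/e$, scaled by $e$, is contained in the set of divisors of $n$ — also immediate). The main (and essentially only) conceptual step is setting up this bijection correctly; once it is in place both parts of the lemma, and then both parts of the theorem via Theorem~\ref{etr}, follow mechanically. I expect no genuine difficulty, only the bookkeeping of keeping the roles of $e$, $n/e$, and the scaled divisor sets straight.
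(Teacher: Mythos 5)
Your proposal is correct and follows the same route as the paper: both parts are immediate substitutions into Theorem~\ref{etr} once the divisor-sum identities $2^{\tau(n)}-1=\sum_{d\mid n}\Phi^\tau(d)$ and $\binom{\tau(n)}{r}=\sum_{d\mid n}\Phi_r^\tau(d)$ are in hand, which is exactly how the paper treats it. The only difference is that the paper states the lemma without proof, whereas your bijection $S\mapsto\bigl(\gcd(S),\,S/\gcd(S)\bigr)$ between nonempty subsets of the divisor set of $n$ and pairs $(e,T)$ with $e\mid n$ and $\gcd(T)=1$ is a correct and complete justification of it.
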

\begin{note}
\emph{Comparing Equation (\ref{rps-1}) and Equation (\ref{comp-1}), we get}
\begin{equation}
\Phi(n)=\begin{cases}
2c_\psi(n)-1&\text{ if }n=1;\\
2c_\psi(n)&\text{ if }n\geq 2.
\end{cases}
\end{equation}
\emph{Comparing Equation (\ref{rps-2}) and Equation (\ref{comp-2}) in the light of Pascal's identity, one can get that}
\begin{equation}
\Phi_r(n)=c_\psi(n,r)+c_\psi(n,r+1).
\end{equation}
\end{note}
\section{ Another way to Euler-type recurrence relations}
\subsection{ Proof of Theorem \ref{etnew1}}
Now we look at some unnoticed things in a recent paper by Yuriy Choliy, Louis W. Kolitsch and Andrew V. Sills \cite{yuriy} that lead to an Euler-type recurrence relation for $qq(n)$ and a similar type of recurrence relation for $q(n)$ involving square numbers in place of pentagonal numbers, which is crux of Theorem \ref{etnew1}.
\begin{lemma}[Euler's partition theorem \cite{euler}]
The following equality holds:
\[
\sum_{n=0}^{\infty}q(n)q^n=\prod_{m=1}^{\infty}(1+q^m)=\prod_{m=1}^{\infty}\frac{1}{1-q^{2m-1}}.
\]
\end{lemma}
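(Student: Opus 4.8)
The plan is to read the asserted chain of equalities from left to right. The first equality, $\sum_{n\ge 0}q(n)q^n=\prod_{m\ge 1}(1+q^m)$, is just the generating-function description of partitions into distinct parts already recalled in Section~1: for $|q|<1$ the product converges, only the factors $1+q^m$ with $m\le n$ affect the coefficient of $q^n$, and expanding shows that coefficient counts the finite subsets $S\subseteq\{1,2,\dots\}$ with $\sum_{m\in S}m=n$, i.e.\ the partitions of $n$ into distinct parts. So the real content is the second equality, Euler's ``distinct${}={}$odd'' identity.

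For that I would use $1+q^m=\dfrac{1-q^{2m}}{1-q^m}$ and pass to partial products. Writing
\[
\prod_{m=1}^{N}(1+q^m)=\frac{\prod_{m=1}^{N}(1-q^{2m})}{\prod_{m=1}^{N}(1-q^m)}=\frac{\prod_{\substack{j\text{ even}\\ N<j\le 2N}}(1-q^{j})}{\prod_{\substack{m\text{ odd}\\ 1\le m\le N}}(1-q^{m})},
\]
since every even index $j\le N$ in the denominator cancels an equal factor of the numerator. Letting $N\to\infty$, the numerator tends to $1$ and the denominator tends to $\prod_{m\text{ odd}}(1-q^{m})$, which is nonzero for $|q|<1$; taking reciprocals gives $\prod_{m\ge 1}(1+q^m)=\prod_{m\ge 1}\frac{1}{1-q^{2m-1}}$. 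Comparing coefficients of $q^n$ then yields the combinatorial form: $q(n)$ equals the number of partitions of $n$ into odd parts.

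The only genuinely necessary, though routine, step is the limit argument: one must justify that the limit of the quotient of partial products equals the quotient of the limits, which rests on the tail bound $\bigl|\prod_{j\text{ even},\,N<j\le 2N}(1-q^j)-1\bigr|\le\prod_{j>N}(1+|q|^{j})-1\to 0$ together with the non-vanishing of $\prod_{m\ge 1}(1-q^m)$ on $|q|<1$. I expect this to be the main (and mild) obstacle. An alternative worth recording is the classical bijective proof: write each distinct part as $2^{a}b$ with $b$ odd, and for each odd $b$ interpret the resulting set of distinct exponents $a$ as the binary expansion of the multiplicity of $b$ in an odd-part partition; this correspondence is its own inverse and reproves the identity at the level of coefficients without any analysis.
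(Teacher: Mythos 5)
Your proof is correct. The paper states this lemma as a classical result of Euler and offers no proof of its own, so there is nothing to compare against; what you have written is the standard argument. The first equality is indeed just the definition of the generating function for distinct partitions, and your telescoping of $1+q^m=\frac{1-q^{2m}}{1-q^m}$ over partial products, with the tail estimate $\bigl|\prod_{N<j\le 2N,\,j\text{ even}}(1-q^j)-1\bigr|\le\prod_{j>N}(1+|q|^j)-1\to 0$ justifying the passage to the limit, is exactly the canonical analytic proof of the ``distinct equals odd'' identity; the Glaisher-style bijection you mention is the equally standard combinatorial alternative. No gaps.
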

Consider the following identity due to Jacobi \cite{jacobi1}:
\begin{equation}\label{jtp}
\prod_{n=1}^{\infty}(1-q^{2n})(1+q^{2n-1})(1+q^{2n-1})=1+2\sum_{m=1}^{\infty}\delta_s(m)q^m.
\end{equation}
Replacing $q$ with $-q$ gives
\begin{equation}\label{j1}
\prod_{n=1}^{\infty}(1-q^n)\prod_{n=1}^{\infty}(1-q^{2n-1})=1+2\sum_{m=1}^{\infty}(-1)^m \delta_s(m)q^m.
\end{equation}
When products in the left extreme were expanded then one can get (a) of Theorem \ref{etnew1}.

Equation (\ref{j1}) can be put as
\[\prod_{n=1}^{\infty}(1-q^n)=\frac{1+2\sum_{m=1}^{\infty}(-1)^m \delta_s(m)q^m}{\prod_{n=1}^{\infty}(1-q^{2n-1})}.
\]
Then by Euler's partition theorem, we have
\[\prod_{n=1}^{\infty}(1-q^n)=\left(1+2\sum_{m=1}^{\infty}(-1)^m \delta_s(m)q^m\right)\left(\sum_{m=0}^{\infty}q(m)q^m\right).
\]
Now (b) of Theorem \ref{etnew1} follows by Euler's Pentagonal Number Theorem.  
\subsection{ Proof of Theorem \ref{etnew2}}
Euler's partition theorem and Gauss identity \ref{gauss1} implies
\begin{equation}
\sum_{n=0}^{\infty}q^{\frac{n(n+1)}{2}}=\left(\prod_{m=1}^{\infty}(1-q^{2m})\right)\left(\sum_{m=0}^{\infty}q(m)q^m\right).
\end{equation}
Since
\[\prod_{m=1}^{\infty}(1-q^{2m})=1+\omega(1)q^2+\omega(2)q^4+\omega(3)q^6+\cdots ,
\]
(a) of Theorem \ref{etnew2} follows. 

Another form of Gauss identity \ref{gauss1}, namely,
\[\left(\prod_{n=1}^{\infty}{\frac{1}{1-q^{2n}}}\right)\left(\sum_{n=0}^{\infty}q^{\frac{n(n+1)}{2}}\right)=\sum_{m=0}^{\infty}q(m)q^m
\]
gives (b) of Theorem \ref{etnew2}, since
\[\prod_{n=1}^{\infty}\frac{1}{1-q^{2n}}=p(0)+p(1)q^2+p(2)q^4+p(3)q^6+\cdots .
\] 
Gauss identity \ref{gauss1} can also be put as
\[\left(\sum_{n=0}^{\infty}q^{\frac{n(n+1)}{2}}\right)\left(\prod_{n=1}^{\infty}(1-q^{2n-1})\right)=\prod_{m=1}^{\infty}(1-q^{2m}).
\]
Since
\[\prod_{m=1}^{\infty}(1-q^{2m-1})=\sum_{n=0}^{\infty}(-1)^nqq(n)q^n,
\]
(c) of Theorem \ref{etnew2} follows. 
\subsection{Proof of Theorem \ref{comp}}
Let $A$ be a set of positive integers. We define 
\[\chi_A(q)=\sum_{a\in A}q^a.
\]
Let $c_A(n)$ be the number of  compositions of $n$ with parts from set $A$. Heubach and Mansour \cite{heubach} documented that, 
\begin{equation}
\sum_{n=0}^{\infty}c_A(n)q^n=\frac{1}{1-\chi_A(q)}.
\end{equation}
Now, in accordance with the Gauss identity \ref{gauss2}, we can write
\begin{align*}
1+\sum_{n=1}^{\infty}(-1)^ns(n)q^n&=\frac{1}{1-\sum_{n=1}^{\infty}(-1)^{n^2}q^{n^2}}\\
&=\frac{1}{1-\left(\frac{\prod_{m=1}^{\infty}\frac{1-q^m}{1+q^m}-1}{2}\right)}\\
&=\frac{2}{3-\prod_{m=1}^{\infty}\frac{1-q^m}{1+q^m}}\\
&=\frac{2\prod_{m=1}^{\infty}(1+q^m)}{3\prod_{m=1}^{\infty}(1+q^m)-\prod_{m=1}^{\infty}(1-q^m)}.
\end{align*}
This gives
\[\left(1+\sum_{n=1}^{\infty}(-1)^ns(n)q^n\right)\left(3\prod_{m=1}^{\infty}(1+q^m)-\prod_{m=1}^{\infty}(1-q^m)\right)=2\prod_{m=1}^{\infty}(1+q^m).
\]
Equating the coefficients of like powers of $q$ in both extremes gives (a) of Theorem \ref{comp}. A similar application of Gauss identity \ref{gauss1} gives (b) of Theorem 4. 
\section{ In the way of Euler's logarithmic derivative}
In this section, we wield the logarithmic derivative technique of Euler \cite{euler2} to conclude Theorem \ref{ld1}.
\subsection{ Proof of Theorem \ref{ld1}}
Taking log on both sides of Equation (\ref{jtp}), we have
\[\sum_{n=1}^{\infty}\log{(1-q^{2n})}+2\sum_{n=1}^{\infty}\log{(1+q^{2n-1})}=\log{\left(1+2\sum_{m=1}^{\infty}\delta_s(m)q^m\right)}.
\]
Differentiating and then multiplying by $q$, we get
\[-\sum_{n=1}^{\infty}\frac{2nq^{2n}}{1-q^{2n}}+2\sum_{n=1}^{\infty}\frac{(2n-1)q^{2n-1}}{1+q^{2n-1}}=\frac{2\sum_{m=1}^{\infty}m\delta_s(m)q^{m}}{1+2\sum_{m=1}^{\infty}\delta_s(m)q^m}.
\]
Replacing $q$ with $-q$, we have then by the Lambert series representation that,
\[\left(1+2\sum_{n=1}^{\infty}(-1)^n\delta_s(n)q^n\right)\left(\sum_{n=1}^{\infty}(\sigma(n)+\sigma_o(n))q^n\right)=2\sum_{m=1}^{\infty}(-1)^{m+1}m\delta_s(m)q^m.
\]
Now (a) of Theorem \ref{ld1} follows. One can apply the same method  in Gauss identity \ref{gauss1} to obtain (b) of Theorem \ref{ld1}.
To realize (c) of Theorem \ref{ld1}, the following lemma is an essential one.
\begin{lemma}\label{sigmas}
We have
\begin{equation}
\sum_{n=1}^{\infty}n\frac{q^n}{1+q^n}=\sum_{n=1}^{\infty}\sigma_s(n)q^n.
\end{equation}
\end{lemma}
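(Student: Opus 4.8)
The plan is to expand each summand on the left-hand side as a geometric series and then rearrange. For $|q|<1$ and each $n\geq 1$ one has
\[
\frac{q^n}{1+q^n}=\sum_{k=1}^{\infty}(-1)^{k-1}q^{nk},
\]
which comes from $\frac{x}{1+x}=\sum_{k\geq 1}(-1)^{k-1}x^k$ with $x=q^n$. Substituting this into $\sum_{n\geq 1}n\frac{q^n}{1+q^n}$ produces the double sum $\sum_{n\geq 1}\sum_{k\geq 1}n(-1)^{k-1}q^{nk}$.

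Next I would interchange the two summations and, for each $m\geq 1$, collect the terms contributing to $q^m$; these are precisely the pairs $(n,k)$ with $nk=m$. Writing $k=d$ for a divisor of $m$ and $n=m/d$, the coefficient of $q^m$ becomes $\sum_{d\mid m}(-1)^{d-1}\tfrac{m}{d}$, which is exactly $\sigma_s(m)$ by definition. Comparing coefficients on both sides then gives the asserted identity.

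The only point needing care is the legitimacy of the rearrangement, and it is a minor one: for fixed $q$ with $|q|<1$ the double series is absolutely convergent, being dominated by $\sum_{n\geq 1}n|q|^n/(1-|q|^n)<\infty$, the ordinary Lambert series attached to $\sigma$; alternatively, at the level of formal power series the identity is immediate because only finitely many pairs $(n,k)$ satisfy $nk=m$. I therefore do not expect a genuine obstacle here — this lemma is simply the alternating analogue of the classical Lambert expansion $\sum_{n\geq 1}n\frac{q^n}{1-q^n}=\sum_{n\geq 1}\sigma(n)q^n$, and it is precisely the form needed to run the logarithmic-derivative argument for part (c) of Theorem \ref{ld1} in parallel with part (a).
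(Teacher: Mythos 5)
Your proof is correct and is essentially identical to the paper's: both expand $\frac{q^n}{1+q^n}$ as the alternating geometric series $\sum_{d\ge 1}(-1)^{d-1}q^{nd}$ and read off the coefficient of $q^m$ as $\sum_{d\mid m}(-1)^{d-1}\frac{m}{d}=\sigma_s(m)$, differing only in variable names. Your added remark on absolute convergence (or the formal-power-series viewpoint) is a small justification the paper omits but does not change the argument.
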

\begin{proof}
 Suppose that $n=dk$ for some positive integers $d$ and $k$. Then the coefficient of $q^n$ in the binomial expansion of $k\frac{q^k}{1+q^k}$ is $(-1)^{d-1}\frac{n}{d}$. Therefore the coefficient of $q^n$ in the sum $\sum_{k=1}^{\infty}k\frac{q^k}{1+q^k}$ is $\sum_{d|n}(-1)^{d-1}\frac{n}{d}$ as expected.
\end{proof}
Apply the logarithmic derivative technique in Gauss identity \ref{gauss2}, then (c) of Theorem \ref{ld1} follows in the light of Lemma \ref{sigmas}.
\subsection{ An extension of Theorem \ref{ld1}}
Again consider the Jacobi's identity \cite{jacobi1}
\[
\prod_{n=1}^{\infty}(1-q^{2n})(1+q^{2n-1})(1+q^{2n-1})=\sum_{m=-\infty}^{\infty}q^{m^2}.
\]
Then for every integer $k\geq 2$, we have
\[\left(\prod_{n=1}^{\infty}(1-q^{2n})(1+q^{2n-1})(1+q^{2n-1})\right)^k=1+\sum_{m=1}^{\infty}r_k(m)q^m,
\]
where $r_k(m)$, as denoted earlier, represents the number of ways in which $m$ can be written as a sum of $k$ squares.

The transformation $q\to -q$ gives
\[\left(\prod_{n=1}^{\infty}(1-q^n)\right)^k\left(\prod_{n=1}^{\infty}(1-q^{2n-1})\right)^k=1+\sum_{m=1}^{\infty}(-1)^mr_k(m)q^m.
\]
Applying the operator $q\frac{d}{dq}$ and using the Lambert series representation as before, we have the following recurrence relation for $r_k(n)$. 
\begin{theorem}
We have
\begin{equation}
k\left(\sum_{i=1}^{n}(\sigma(i)+\sigma_o(i))((-1)^{n-i}r_k(n-i))\right)=(-1)^{n+1}nr_k(n).
\end{equation}
\end{theorem}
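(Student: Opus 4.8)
The plan is to push the displayed product identity through Euler's logarithmic-derivative machinery, exactly as in the proof of Theorem~\ref{ld1}(a), which is the $k=1$ shadow of this statement. Write
\[
F(q)=\left(\prod_{n=1}^{\infty}(1-q^n)\right)^k\left(\prod_{n=1}^{\infty}(1-q^{2n-1})\right)^k=1+\sum_{m=1}^{\infty}(-1)^mr_k(m)q^m,
\]
so that $qF'(q)=\sum_{m=1}^{\infty}m(-1)^mr_k(m)q^m$. Taking $\log$ of the product form turns it into the sum $\log F(q)=k\sum_{n=1}^{\infty}\log(1-q^n)+k\sum_{n=1}^{\infty}\log(1-q^{2n-1})$.

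Next I would apply the operator $q\frac{d}{dq}$ to both sides. The left-hand side becomes $qF'(q)/F(q)$, while the right-hand side becomes $-k\sum_{n=1}^{\infty}\frac{nq^n}{1-q^n}-k\sum_{n=1}^{\infty}\frac{(2n-1)q^{2n-1}}{1-q^{2n-1}}$. The Lambert series representation identifies the first sum with $\sum_{m=1}^{\infty}\sigma(m)q^m$; for the second sum one checks that expanding $\frac{(2n-1)q^{2n-1}}{1-q^{2n-1}}=\sum_{j\geq1}(2n-1)q^{(2n-1)j}$ and collecting the coefficient of $q^m$ gives the sum of the odd divisors of $m$, i.e.\ $\sigma_o(m)$ (this is the only step that needs a word of justification, and it is the same observation already used for Theorem~\ref{ld1}(a)). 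Hence $qF'(q)/F(q)=-k\sum_{m=1}^{\infty}(\sigma(m)+\sigma_o(m))q^m$.

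Clearing the denominator gives $qF'(q)=-kF(q)\sum_{m=1}^{\infty}(\sigma(m)+\sigma_o(m))q^m$. Substituting the two power-series expressions for $qF'(q)$ and $F(q)$, multiplying out the product on the right, and equating the coefficient of $q^n$ — with the convention $r_k(0)=1$, so that the constant term $1$ of $F(q)$ is absorbed into the $i=n$ summand — yields $n(-1)^nr_k(n)=-k\sum_{i=1}^{n}(\sigma(i)+\sigma_o(i))(-1)^{n-i}r_k(n-i)$. Multiplying through by $-1$ and rewriting $-(-1)^n=(-1)^{n+1}$ gives the asserted recurrence.

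I do not expect a genuine obstacle: the argument is a straightforward generalization of the $k=1$ derivation already carried out for Theorem~\ref{ld1}(a). The only points requiring care are the sign bookkeeping coming from the $q\to-q$ substitution already built into $F$ (which is why the coefficients read $(-1)^mr_k(m)$ rather than $r_k(m)$), and the $r_k(0)=1$ convention when extracting the $q^n$ coefficient of the product; both are routine.
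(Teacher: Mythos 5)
Your proposal is correct and follows essentially the same route as the paper: the paper likewise raises Jacobi's triple product identity to the $k$-th power, substitutes $q\to-q$ to obtain $\left(\prod(1-q^n)\right)^k\left(\prod(1-q^{2n-1})\right)^k=1+\sum_{m\geq 1}(-1)^mr_k(m)q^m$, applies $q\frac{d}{dq}\log$, and reads off the coefficient of $q^n$ via the Lambert series for $\sigma(m)+\sigma_o(m)$. The sign bookkeeping and the $r_k(0)=1$ convention you flag are handled identically (if much more tersely) in the paper's one-line derivation.
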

As an immediate consequence, we get the following congruence property of $r_k(n)$.
\begin{corollary}
Let $n$ and $k$ be positive integers such that $\gcd(k,n)=1$. We have
\begin{equation}
r_k(n)\equiv 0(mod\ k).
\end{equation}
\end{corollary}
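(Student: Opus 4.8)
The plan is to read the divisibility straight off the recurrence supplied by the preceding theorem, namely
\[
k\left(\sum_{i=1}^{n}(\sigma(i)+\sigma_o(i))\,(-1)^{n-i}r_k(n-i)\right)=(-1)^{n+1}n\,r_k(n).
\]
First I would observe that every ingredient on the left-hand side is an integer: the divisor sums $\sigma(i)$ and $\sigma_o(i)$ are integers, each sign $(-1)^{n-i}$ equals $\pm 1$, and each $r_k(n-i)$ (for $1\le i\le n$, hence $0\le n-i\le n-1$) counts representations of an integer and so is a non-negative integer. Thus the bracketed sum is an integer and the entire left-hand side is $k$ times an integer. It follows that $k$ divides $(-1)^{n+1}n\,r_k(n)$, and since $(-1)^{n+1}=\pm1$ this is equivalent to $k\mid n\,r_k(n)$.

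The second and final step is to invoke the hypothesis $\gcd(k,n)=1$. By Euclid's lemma, a divisor of $n\,r_k(n)$ that is coprime to $n$ must divide $r_k(n)$; hence $k\mid r_k(n)$, which is exactly the assertion of the corollary.

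There is essentially no obstacle: all the substance is already contained in the preceding theorem, and the corollary is a one-line deduction. The only point worth a sentence of care is that the recurrence yields $k\mid n\,r_k(n)$ for every $n$ rather than $k\mid r_k(n)$ directly, so it is precisely the coprimality hypothesis that lets one cancel the superfluous factor $n$; that hypothesis is therefore used in an essential way and cannot be dropped from the statement.
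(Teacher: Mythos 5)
Your proof is correct and follows exactly the intended route: the paper presents the corollary as an "immediate consequence" of the preceding recurrence, and the deduction is precisely the one you give — the left-hand side is $k$ times an integer, so $k\mid n\,r_k(n)$, and the coprimality hypothesis lets you cancel $n$. Nothing further is needed.
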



\begin{thebibliography}{10}
\bibitem{abram}
Abramowitz Milton; I. A. Stegun, {\it Handbook of Mathematical functions with formulas, Graphs and Mathematical tables}, United States Department of Commerce, National Bureau of  Standards, 1964.
\bibitem{catalan}
E. Catalan, {\it M\'elanges Math\' ematiques}, Mem. Soc. Sci. Lie\`ge  {\bf 12}(2) (1885); orig. publ. 1868.
\bibitem{david}
A. David Christopher, Remainder sum and quotient sum function, {\it Discrete Math. Algorithms Appl},
{\bf 7}(1)(2015), 1550001.
\bibitem{euler}
L. Euler, {\it Introductio in analysin infinitorum}, Chapter 16, Marcum-Michaelem Bousquet, Lausanae, 1748.
\bibitem{euler2}
L. Euler, Observatio de summis divisorum, {\it Novi Commentarii academiae scientiarum Petropolitanae}, {\bf 5} (1760), 59-74.
\bibitem{gauss}
C. F. Gauss,  {\it Werke}, vol. 3. K\"{o}nigliche Gesellschaftder Wissenschaften, G\"{o}ttingen, 1866.
\bibitem{gould}
H. W. Gould, Binomial coefficients, the bracket function, and Compositions with relatively prime summands, {\it Fibonacci Quart}, {\bf 23}(1964), 241-260.
\bibitem{heubach}
S. Heubach, T. Mansour, Compositions of $n$ with parts in a set, {\it Congr. Numer}, {\bf 164} (2004), 127-143.
\bibitem{jacobi1}
C. G. J. Jacobi, {\it Gesammelte Werke}, vol.1, pp. 49-239, Reimer, Berlin, (reprinted-1881).
\bibitem{jacobi2}
C. G. J. Jacobi, {\it Fundamenta nova theoriae functionum ellipticarum}, K\"onigsberg, 1829.
\bibitem{nathan}
M. B. Nathanson, Affine invariants, relatively prime sets, and a phi function for subsets of $\left\{1,2,\ldots,N\right\}$, {\it INTEGERS}, $\#$A1,{\bf 7}(2007).
\bibitem{moh}
Mohamed El Bachraoui, On the parity of $p(n,3)$ and $p_\psi(n,3)$, {\it Contrib. Discrete Math}, {\bf 5}(2) (2010), 35-39.
\bibitem{yuriy}
Yuriy Choliy, Louis W. Kolitsch, Andrew V. Sills, Partition Recurrences, {\it  INTEGERS}, $\#$A1, {\bf 18B} (2018).
\end{thebibliography}
\end{document}